\newcommand{\ba}{\mathbf{a}}
\newcommand{\balpha}{{\boldsymbol{\alpha}}}
\newcommand{\bb}{\mathbf{b}}
\newcommand{\bbeta}{{\boldsymbol{\beta}}}
\newcommand{\bc}{\mathbf{c}}
\newcommand{\bd}{\mathbf{d}}
\newcommand{\be}{\mathbf{e}}
\newcommand{\bj}{\mathbf{j}}
\newcommand{\bk}{\mathbf{k}}
\newcommand{\blambda}{{\boldsymbol{\lambda}}}
\newcommand{\bm}{\mathbf{m}}
\newcommand{\bmu}{{\boldsymbol{\mu}}}
\newcommand{\bn}{\mathbf{n}}
\newcommand{\bnu}{{\boldsymbol{\nu}}}
\newcommand{\bq}{\mathbf{q}}
\newcommand{\bx}{\mathbf{x}}
\newcommand{\bJ}{\mathbf{1}}
\newcommand{\bZ}{\mathbf{0}}
\newcommand{\C}{\mathbb{C}}
\newcommand{\cB}{\mathcal{B}}
\newcommand{\cK}{\mathcal{K}}
\newcommand{\cN}{\mathcal{N}}
\newcommand{\tcN}{\widetilde{\cN}}
\newcommand{\cZ}{\mathcal{Z}}
\newcommand{\diag}{\mathop{\mathrm{diag}}}
\newcommand{\E}{\mathbb{E}}
\newcommand{\eval}{\mathop{\mathrm{eval}}\nolimits}
\newcommand{\F}{\mathbb{F}}
\newcommand{\I}{\mathrm{i}}
\newcommand{\intd}{\,\rd}
\newcommand{\Q}{\mathbb{Q}}
\newcommand{\R}{\mathbb{R}}
\renewcommand{\Re}{\mathop{\mathrm{Re}}}
\newcommand{\ROHP}{\mathbb{H}^+}
\newcommand{\Rtimes}{\R^\times}
\newcommand{\tr}{\mathop{\mathrm{tr}}\nolimits}
\newcommand{\tup}[1]{\textup{#1}}
\newcommand{\rd}{\mathrm{d}}
\newcommand{\Z}{\mathbb{Z}}
\theoremstyle{plain}
\newtheorem{theorem}{Theorem}[section]
\newtheorem{proposition}[theorem]{Proposition}
\newtheorem{lemma}[theorem]{Lemma}
\newtheorem{corollary}[theorem]{Corollary}
\theoremstyle{definition}
\newtheorem{definition}[theorem]{Definition}
\newtheorem{remark}[theorem]{Remark}
\numberwithin{equation}{section}
\begin{document}
\title{Hirschman--Widder densities}

\author{Alexander Belton}
\address[A.~Belton]{Department of Mathematics and Statistics,
Lancaster University, Lancaster, UK}
\email{\tt a.belton@lancaster.ac.uk}

\author{Dominique Guillot}
\address[D.~Guillot]{University of Delaware, Newark, DE, USA}
\email{\tt dguillot@udel.edu}

\author{Apoorva Khare}
\address[A.~Khare]{Department of Mathematics, Indian Institute of
Science; and Analysis and Probability Research Group; Bangalore, India}
\email{\tt khare@iisc.ac.in}

\author{Mihai Putinar}
\address[M.~Putinar]{University of California at Santa Barbara, CA,
USA and Newcastle University, Newcastle upon Tyne, UK} 
\email{\tt mputinar@math.ucsb.edu, mihai.putinar@ncl.ac.uk}

\date{8th April 2022}

\keywords{P\'olya frequency function, totally positive function,
hypoexponential distribution}

\subjclass[2010]{15B48 (primary); %
05E05, 15A15, 30C40, 44A10, 47B34 (secondary)}

\begin{abstract}
Hirschman and Widder introduced a class of P\'olya frequency functions
given by linear combinations of one-sided exponential functions. The
members of this class are probability densities, and the class is
closed under convolution but not under pointwise multiplication. We
show that, generically, a polynomial function of such a density is a
P\'olya frequency function only if the polynomial is a homothety, and
also identify a subclass for which each positive-integer power is a
P\'olya frequency function. We further demonstrate connections between
the Maclaurin coefficients, the moments of these densities, and the
recovery of the density from finitely many moments, via Schur
polynomials.
\end{abstract}

\maketitle

\section{Introduction and main results}

The class of P\'olya frequency functions is central to the theory of
total positivity. Its basic properties were announced by Schoenberg in
1947--48~\cite{Schoenberg47,Schoenberg48} and further details were
provided in subsequent work \cite{Schoenberg50,Schoenberg51}. These
functions have been actively studied ever since.

\begin{definition}[Schoenberg]\label{DPFF}
A function $\Lambda : \R \to [ 0, \infty )$ is a \emph{P\'olya
frequency function} if it is Lebesgue integrable and non-zero at two
or more points, and the Toeplitz kernel
\[
T_\Lambda : \R \times \R \to \R; \ ( x, y ) \mapsto \Lambda( x - y )
\]
is totally non-negative. This last statement means that, for any
integer $p \geq 1$ and real numbers
\[
x_1 < \cdots < x_p \qquad \text{and} \qquad y_1 < \cdots < y_p,
\]
the matrix $\bigl( \Lambda( x_j - y_k ) \bigr)_{j, k = 1}^p$ has
non-negative determinant.
\end{definition}

Schoenberg showed in~\cite{Schoenberg51} that the bilateral Laplace
transform
\[
\cB\{ \Lambda \}( s ) := \int_\R e^{-x s} \Lambda( x ) \intd x
\]
of a P\'olya frequency function $\Lambda$ converges in a open strip
containing the imaginary axis, and equals on this strip the reciprocal
of an entire function $\Psi$ in the Laguerre--P\'olya
class~\cite{Laguerre1,Polya0}, with $\Psi( 0 ) = 1$. Conversely, any
function $\Psi$ of this form agrees with the reciprocal of the
bilateral Laplace transform of some P\'olya frequency function on its
strip of convergence. Schoenberg also proved that a P\'olya frequency
function necessarily has unbounded support, and either vanishes
nowhere or vanishes on a semi-axis. Members of the latter class of
functions are said to be \emph{one sided}, and Schoenberg
\cite{Schoenberg51} also characterized this subclass via the bilateral
Laplace transform. This characterization also allows the non-smooth
members of this subclass to be identified.

\begin{theorem}[Schoenberg]\label{TschoenbergPF}
If the one-sided P\'olya frequency function $\Lambda$ vanishes
on~$( -\infty, 0 )$ then $1 / \cB\{ \Lambda \}$ is the restriction of
an entire function $\Psi$ in the first Laguerre--P\'olya class, so has
the form
\begin{equation}\label{ELP}
\Psi( s ) = C e^{\delta s} \prod_{j = 1}^\infty ( 1 + \alpha_j s ), %
\quad \text{where } C > 0, \ \delta \geq 0, \ \alpha_j \geq 0 %
\text{ and } 0 < \sum_{j = 1}^\infty \alpha_j < \infty.
\end{equation}

Conversely, if the entire function $\Psi$ has the form
\tup{(\ref{ELP})} then there exists a P\'olya frequency function
$\Lambda$ that vanishes on $( -\infty, 0 )$ such that
$\Psi( s ) = 1 / \cB\{ \Lambda \}( s )$ on an open strip containing
the origin.

Such a P\'olya frequency function $\Lambda$ is continuous and positive
on~$( \delta, \infty )$ and vanishes on $( -\infty, \delta )$.
Furthermore, the function $\Lambda$ is smooth if and only if
$\alpha_j$ is non-zero for infinitely many $j$, and is continuous
unless~$\alpha_j$ is non-zero for exactly one $j$.
\end{theorem}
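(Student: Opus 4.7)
The plan is to deduce Theorem \ref{TschoenbergPF} from the general characterization of P\'olya frequency functions quoted in the preceding paragraph, which identifies $1/\cB\{\Lambda\}$ for an arbitrary P\'olya frequency function $\Lambda$ with an entire function in the (full) Laguerre--P\'olya class of the form $\Psi(s)=Ce^{-\gamma s^{2}+\delta s}\prod_{j}(1+\alpha_{j}s)e^{-\alpha_{j}s}$ with $\gamma\geq 0$, real $\alpha_{j}$, and $\sum\alpha_{j}^{2}<\infty$. The main tasks are then: (i) extract from the one-sidedness hypothesis the stronger conclusion that $\gamma=0$, that all $\alpha_{j}\geq 0$, and that in fact $\sum\alpha_{j}<\infty$ (so the linear exponential factors can be absorbed into the single shift $e^{\delta s}$); (ii) provide a constructive inverse by explicit convolution; and (iii) read off the qualitative regularity and support properties from that convolution representation.

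For step (i), since $\Lambda$ is integrable and supported in $[0,\infty)$, the bilateral Laplace transform $\cB\{\Lambda\}$ is bounded and analytic in the right half-plane $\ROHP$, and in particular is zero-free there. Consequently the zeros of $\Psi$ lie in $\{\Re s\leq 0\}$. As $\Psi$ belongs to the Laguerre--P\'olya class, its zeros are real, hence non-positive, which forces each $\alpha_{j}\geq 0$. To eliminate the Gaussian factor, I would use that convolution of any function with a true Gaussian produces a strictly positive, everywhere non-vanishing function, contradicting the one-sided support; equivalently, the order of $\Psi$ must be~$1$ rather than~$2$, because $|\cB\{\Lambda\}(s)|$ is bounded on $\ROHP$ and cannot decay faster than any polynomial. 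Once $\gamma=0$ and all $\alpha_{j}\geq 0$, the convergence condition $\sum\alpha_{j}^{2}<\infty$ can be upgraded to $\sum\alpha_{j}<\infty$ by the same order-$1$ argument (the product $\prod(1+\alpha_{j}s)$ must itself converge as a genus-zero canonical product), so the correcting exponentials $e^{-\alpha_{j}s}$ merge into a single factor $e^{\delta' s}$ which we combine with the original $e^{\delta s}$; positivity of the combined shift follows from the fact that $\Lambda$ vanishes on $(-\infty,0)$.

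For the converse (step (ii)), I would start from the building blocks: each factor $(1+\alpha_{j}s)^{-1}$ is the bilateral Laplace transform of the exponential density $\Lambda_{j}(x)=\alpha_{j}^{-1}e^{-x/\alpha_{j}}\mathbf{1}_{[0,\infty)}(x)$, which is itself a P\'olya frequency function (the kernel $e^{-(x-y)/\alpha_{j}}\mathbf{1}_{x\geq y}$ is totally non-negative by a direct computation). The finite convolutions $\Lambda_{N}:=C\cdot\tau_{\delta}(\Lambda_{1}\ast\cdots\ast\Lambda_{N})$, where $\tau_{\delta}$ denotes translation by $\delta$, are P\'olya frequency functions whose Laplace transforms are the partial products of $1/\Psi$. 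The condition $\sum\alpha_{j}<\infty$ guarantees $L^{1}$-convergence of $\Lambda_{N}$ to a limit $\Lambda$ (most transparently by bounding the mean and variance uniformly in $N$ and invoking tightness), and the class of P\'olya frequency functions is closed under $L^{1}$-limits of bounded Laplace transforms, so $\Lambda$ is again in the class with $\cB\{\Lambda\}=1/\Psi$.

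For step (iii), continuity and smoothness can be read off the explicit convolution. A single exponential is discontinuous at its left endpoint, which explains the exceptional case. Once two or more exponential factors are present, the convolution $\Lambda_{1}\ast\Lambda_{2}$ is continuous and vanishes at the endpoint of its support; each further convolution raises the smoothness by one degree, and an infinite convolution is therefore $C^{\infty}$. Positivity of $\Lambda$ on $(\delta,\infty)$ is inherited from positivity of each $\Lambda_{j}$ on $(0,\infty)$ and standard properties of convolutions of non-negative functions with connected supports. The main obstacle I expect is calibrating the passage to the limit in step (ii) carefully enough that the qualitative conclusions of step (iii) are preserved --- in particular, verifying that $\Lambda$ actually vanishes at the endpoint $\delta$ (so the support is exactly $[\delta,\infty)$ and positivity holds on the open half-line) rather than merely being supported there, and that the smoothness gained from finitely many convolutions propagates to the infinite product without degeneration at $x=\delta$.
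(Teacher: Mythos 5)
The paper does not supply a proof of this theorem: it is stated with the attribution ``[Schoenberg]'' and is quoted from \cite{Schoenberg51}, so there is no in-paper argument to compare yours against. Your overall strategy --- start from the two-sided Laguerre--P\'olya characterization, use the support condition to kill the Gaussian, push the zeros onto $(-\infty,0]$, tighten the zero-summability, and prove the converse by convolving translated one-sided exponentials --- is the natural one and is in the spirit of Schoenberg's own development. But there is a genuine gap in the step where you upgrade $\sum_j \alpha_j^2 < \infty$ to $\sum_j \alpha_j < \infty$.

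You justify this step by asserting that ``the product $\prod(1+\alpha_j s)$ must itself converge as a genus-zero canonical product'' because $\Psi$ has order $1$. This is false: order $1$ does not force genus $0$. The reciprocal Gamma function $1/\Gamma$ has order $1$, has all of its zeros at $0,-1,-2,\ldots$ on the negative real axis, and satisfies $\sum 1/|z_n| = \sum 1/n = \infty$. So some extra input coming from total positivity or one-sidedness is required, not just the order. One clean way to close the gap is elementary: since $\Lambda$ is integrable, $\cB\{\Lambda\}(\sigma) \leq \|\Lambda\|_{1}$ for all $\sigma > 0$, hence $\Psi(\sigma) \geq 1/\|\Lambda\|_{1}$. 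Writing $\Psi(s)=Ce^{\delta_{0}s}\prod_{j}(1+\alpha_{j}s)e^{-\alpha_{j}s}$ with $\alpha_j\geq 0$ and no Gaussian, take logarithms on the positive real axis, divide by $\sigma$, and apply Fatou's lemma to the non-negative summands $[\alpha_{j}\sigma-\log(1+\alpha_{j}\sigma)]/\sigma \to \alpha_{j}$: this yields $\sum_{j}\alpha_{j}\leq\delta_{0}<\infty$, and simultaneously shows that the collected shift $\delta=\delta_{0}-\sum_{j}\alpha_{j}$ is non-negative. Alternatively one can invoke the theorem that an entire function of exponential type whose zeros all lie on a single ray from the origin has $\sum 1/|z_{n}| < \infty$ (which follows from the convergence of $\sum\Re(1/z_{n})$), but then you must first establish finite exponential type, which again requires the lower bound on $\cB\{\Lambda\}(\sigma)$; the bare phrase ``cannot decay faster than any polynomial'' does not supply this. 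A minor secondary point: in step (ii) the assertion that the P\'olya frequency class is ``closed under $L^{1}$-limits of bounded Laplace transforms'' should be replaced by the statement actually needed --- total non-negativity of the Toeplitz kernel passes to pointwise limits of the $\Lambda_{N}$, so one should establish locally uniform (or at least pointwise) convergence, not merely $L^{1}$ convergence.
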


In this note, we study the one-sided P\'olya frequency functions which
are continuous but non-smooth, that is, those with at least two but
only finitely many non-zero terms $\alpha_1$, \ldots, $\alpha_m$ in
(\ref{ELP}), and their powers. We may normalize so that $\Lambda$ is a
probability density function, whence $C = 1$, and we may also assume
$\delta = 0$, by replacing $\Lambda$ with
$x \mapsto \Lambda( x + \delta )$. Thus, we study the collection of
\emph{finitely determined} P\'olya frequency functions of the form
$\Lambda_\balpha$, such that
\[
\cB\{ \Lambda_\balpha \}( s ) = %
\prod_{j = 1}^m ( 1 + \alpha_j s )^{-1}, \qquad %
\text{where } \balpha := ( \alpha_1, \ldots, \alpha_m ) %
\text{ and } m \geq 2.
\]

Some historical comments are appropriate, and here we recount this
subject's early developments in chronological order. In 1947,
Schoenberg~\cite{Schoenberg47} announced the notion of a P\'olya
frequency function. In their 1949 work, Hirschman and
Widder~\cite{HW49} studied $\Lambda_\balpha$ for distinct positive
$\alpha_1$, \ldots, $\alpha_m$ and its degree of smoothness, via the
Laplace transform. This was followed by Schoenberg's first full paper
on P\'olya frequency functions~\cite{Schoenberg51} in 1951. In this
work, Schoenberg placed the analysis of Hirschman and Widder in a
wider context, with the last part of Theorem~\ref{TschoenbergPF}
showing that the collection of Hirschman--Widder functions is dense,
in a suitable sense, in the set of non-smooth one-sided P\'olya
frequency functions. Finally, Hirschman and Widder's 1955
monograph~\cite{HW} contains a detailed analysis of these functions
and their Laplace transforms, and provides ample evidence for the
relevance of such functions to operational calculus and approximation
theory.

For this reason, we adopt the following
terminology for this family of functions, now allowing for
non-distinct and negative $\alpha_j$. For brevity, we let
$\Rtimes := \R \setminus \{ 0 \}$ denote the set of non-zero real
numbers.

\begin{definition}\label{DHWdensity}
Given $\balpha = ( \alpha_1, \ldots, \alpha_m ) \in (\Rtimes)^m$, where
$m \geq 2$, the corresponding \emph{Hirschman--Widder density} is the
unique continuous function $\Lambda_\balpha : \R \to [ 0, \infty )$
with bilateral Laplace transform
\begin{equation}\label{ELaplaceHW}
\int_\R e^{-x s} \Lambda_\balpha( x ) \intd x = %
\prod_{j = 1}^m ( 1 + \alpha_j s )^{-1}
\end{equation}
on the open half-plane
$\{ s \in \C : %
\Re s > -\alpha_j^{-1} \text{ for } j = 1, \ldots, m \}$.
\end{definition}

The next section focuses on some basic properties of these functions.

\begin{enumerate}
\item Each such function $\Lambda_\balpha$ exists and is unique.

\item The function $\Lambda_\balpha$ is both a P\'olya frequency
function and a probability density function. It is one sided if
and only if all the entries of $\balpha$ have the same sign.

\item The function $\Lambda_\balpha$ has a multiplicative
representation via convolution, as well as an additive
one involving one-sided exponentials.
The class of P\'olya frequency functions and its subclass of
Hirschman--Widder densities are both semigroups for the convolution
product.
\end{enumerate}

However, this collection of densities is not closed under pointwise
multiplication. The principal contribution of the present work is to
identify when some simple algebraic operations preserve the class of
Hirschman--Widder densities and when they do not. More specifically,
we focus on polynomial functions and study the generic behaviour of
these operations and their departure from mapping the class of
densities to itself.

We consider only those transforms which preserve the P\'olya frequency
property of infinite order, that is, with the natural number $p$
arbitrarily large in Definition~\ref{DPFF}. It was known to Karlin in the
1960s \cite{KarlinTAMS} that there exist P\'olya frequency functions
whose $\alpha$th powers are totally non-negative to some fixed finite
order for sufficiently large $\alpha$; see also~\cite[p.~115]{KK}. We
note that requiring total non-negativity only up to some finite order no
longer guarantees that the reciprocal of the Laplace transform is entire.

Our main result is as follows.

\begin{theorem}\label{T1sidedexample}\mbox{}\par
\begin{enumerate}
\item Suppose $m \geq 3$. There exists a subset $\cN$ of
$( 0, \infty )^m$ with Lebesgue measure zero such that
\[
p \circ \Lambda_\balpha : %
x \mapsto p\bigl( \Lambda_\balpha( x ) \bigr)
\]
is not a P\'olya frequency function for any
$\balpha \in ( 0, \infty )^m \setminus \cN$ and any real polynomial
$p$ that is not a homothety, that is, $p( x ) \not\equiv c x$ for any
$c > 0$.

\item Suppose
$\balpha = ( \alpha_1, \ldots, \alpha_m ) \in ( 0, \infty )^m$, where
$m \geq 2$, is such that the reciprocals $a_1 := \alpha_1^{-1}$,
\ldots, $a_m := \alpha_m^{-1}$ form an arithmetic progression. Then
$c \Lambda_{\balpha}^n$ is a P\'olya frequency function for every
$c > 0$ and every integer $n \geq 1$. If, moreover,
$\alpha_1 = \alpha_m$ or $\alpha_1 / \alpha_2$ is irrational, then
$p \circ \Lambda_\balpha$ is not a P\'olya frequency function for any
other real polynomial $p$.
\end{enumerate}
\end{theorem}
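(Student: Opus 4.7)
The plan is to attack both parts via the Laplace-transform characterization in Theorem~\ref{TschoenbergPF}: a one-sided, finitely determined P\'olya frequency density has rational Laplace transform whose reciprocal is a polynomial of the form $C\prod_j(1+\alpha'_j s)$ with $C,\alpha'_j>0$. Since $\cB\{p(\Lambda_\balpha)\}$ is always rational (being the Laplace transform of a linear combination of exponentials, possibly multiplied by polynomials, on $(0,\infty)$), testing whether $p\circ\Lambda_\balpha$ is a P\'olya frequency function reduces to verifying that this rational function has a reciprocal of the above form. Integrability forces $p(0)=0$ throughout, so I will write $p(y)=\sum_{k\in S}c_ky^k$ with $S\subset\Z_{\geq 1}$ finite.

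For Part~(2), writing $a_j=1/\alpha_j=a+(j-1)d$, the plan is to carry out the partial-fraction expansion of $\prod_j a_j/(s+a_j)$ using the factorial simplification $\prod_{k\neq j}(a_k-a_j)=(-1)^{j-1}(j-1)!(m-j)!\,d^{m-1}$ and reassemble via the binomial theorem to obtain the closed form
\begin{equation*}
\Lambda_\balpha(x)=C\,e^{-ax}(1-e^{-dx})^{m-1},\qquad x>0.
\end{equation*}
This form is closed under $n$-th powers with updated parameters $(na,n(m-1))$, identifying $c\Lambda_\balpha^n$ as a Hirschman--Widder density whose reciprocals form an arithmetic progression of length $n(m-1)+1$, settling the first claim. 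For the converse, use the Beta-function integral (and its $d\to 0$ gamma-function limit) to compute $\cB\{\Lambda_\balpha^k\}(s)=A_k/\prod_{j=0}^{k(m-1)}(s+ka+jd)$ with $A_k>0$. Under either hypothesis $d=0$ or $d/a\in\R\setminus\Q$, the poles $-(ka+jd)$ for $k\in S$ and $0\leq j\leq k(m-1)$ are pairwise distinct; summing over a common denominator and inspecting the leading coefficient (dominated by the term with smallest $k\in S$, whose coefficient $A_k\neq 0$), the numerator has positive degree whenever $|S|\geq 2$, contradicting the polynomial-reciprocal requirement. Hence $|S|=1$, and positivity together with nontriviality force $p(y)=cy^n$ with $c>0,n\geq 1$.

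For Part~(1), I would expand $\Lambda_\balpha(x)=\sum_\ell\gamma_\ell e^{-a_\ell x}$ on $x>0$ via partial fractions, so by the multinomial theorem $\Lambda_\balpha^k(x)=\sum_{|\bk|=k}(k!/\bk!)\prod_\ell\gamma_\ell^{k_\ell}e^{-(\bk\cdot\ba)x}$. For $\balpha$ outside a measure-zero set where distinct sums $\bk\cdot\ba$ coincide, the Laplace transform $\cB\{p(\Lambda_\balpha)\}(s)$ has simple poles at $-\bk\cdot\ba$ with residues $\rho_\bk=c_{|\bk|}(|\bk|!/\bk!)\prod_\ell\gamma_\ell^{k_\ell}$; the polynomial-reciprocal requirement then becomes
\begin{equation*}
\rho_\bk\cdot\prod_{\bk'\neq\bk}((\bk-\bk')\cdot\ba)=\text{const}
\end{equation*}
as $\bk$ varies over the multi-index set indexed by the support $S$ of $p$. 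This identity is tautological for the homothety support $\{1\}$ (reducing to the well-known $\gamma_i\prod_{j\neq i}(a_i-a_j)=(-1)^{m-1}\prod_\ell a_\ell$), but for any other support $S$ the additional factors introduced by higher-degree terms turn it into a non-trivial polynomial relation on $\ba$. Define $\cN$ to be the countable union, over all finite $S\subset\Z_{\geq 1}$ with $S\neq\{1\}$, of the resulting proper algebraic subvarieties of $(0,\infty)^m$, together with the measure-zero sets where pole coincidences occur; then $\cN$ has Lebesgue measure zero, and for $\balpha\notin\cN$ only the homothety $p(y)=c_1y$ with $c_1>0$ yields a PF function $p\circ\Lambda_\balpha$.

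The main obstacle is verifying, for each candidate support $S\neq\{1\}$ in Part~(1), that the residue-matching variety is indeed a proper subvariety of $(0,\infty)^m$. When $S$ is a singleton $\{k\}$ with $k\geq 2$, this reduces to exhibiting a non-arithmetic-progression $\balpha$ for which $\Lambda_\balpha^k$ is not PF, achievable by a degree argument mirroring Part~(2)'s converse at an explicit non-AP point (for instance, the case $m=3$, $k=2$ yields the concrete non-trivial equation $(a_1a_2a_3)(a_2+a_3-2a_1)=2(a_1+a_2)(a_1+a_3)(a_2+a_3)$). For $|S|\geq 2$, the delicate step is to track how the tautological homothety identity is deformed by higher-degree terms and extract a non-trivial polynomial identity on $\ba$; carrying out this analysis uniformly in $S$, perhaps via a Vandermonde-type rank argument applied to the residue system, is the most technical part of the proof.
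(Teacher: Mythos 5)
Your framework is the right one—Laplace transform, partial fractions, and the criterion that the numerator $P$ of $\cB\{p\circ\Lambda_\balpha\}=P/Q$ must be constant (since $Q$ has simple roots generically and $P$ never vanishes at a root of $Q$). Part~(2) is essentially complete and is, if anything, cleaner than the paper's route: the closed form $\Lambda_\balpha(x)=Ce^{-ax}(1-e^{-dx})^{m-1}$, proved by the binomial/factorial simplification of the Vandermonde coefficients and the Beta-function integral, makes closure of the AP class under powers transparent, whereas the paper instead reduces the general AP to the $m=2$ case and then composes. The degree argument in your converse for~(2) (the summand with smallest $k\in S$ dominates, so $P$ is non-constant) matches the paper's; the irrationality of $d/a$ is equivalent to that of $\alpha_1/\alpha_2$, so the pole-simplicity step is fine.

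The genuine gap is in Part~(1). You correctly reduce everything to the claim that, for each finite support $S\neq\{1\}$, the locus of $\ba$ where
\[
\rho_\bk\prod_{\bk'\neq\bk}(\bk'-\bk)\cdot\ba
\]
is constant in $\bk$ is a \emph{proper} algebraic subvariety of $(0,\infty)^m$—but you do not prove it, and you explicitly flag it as ``the most technical part.'' This is precisely where the substance of the theorem lies. The paper handles it by writing down, for each $K$ with $n=\max K$, the explicit polynomial $P_K(\ba)=f_1(\ba)-f_2(\ba)$ (a difference of two specific interpolation values of $P$), and then proving $P_K\not\equiv 0$ by comparing $a_1$-degrees: for $m\geq 4$ one gets $\deg_{a_1}f_1-\deg_{a_1}f_2\geq\binom{n+m-2}{m-2}-n(m-2)-1>0$, while the $m=3$ case fails this bound and must be treated separately—singleton $K$ via a result from prior work, and $|K|\geq 2$ via the tighter count $|\Delta_2(K)|-n-1>0$. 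Your proposal neither supplies a non-degeneracy argument that works uniformly in $(m,S)$ nor notices that $m=3$ requires special handling; exhibiting one failing point per $(m,k)$ (your suggestion for singletons) does not scale to a proof, and the ``Vandermonde-type rank argument'' for $|S|\geq2$ is left entirely unspecified. Until that non-vanishing claim is established, Part~(1) is not proved.
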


We remark that the first assertion with $m = 3$ and $p( x ) = x^n$ was
shown in our recent work \cite{BGKP-TN}, and played a key role in
characterizing those post-composition transforms that preserve the
class of one-sided P\'olya frequency functions.
Theorem~\ref{T1sidedexample} shows that these $m = 3$ examples are
merely the first in a large, multi-parameter family of P\'olya
frequency functions with the same property.

In the case where $p( x ) = x^n$, note that the assumption $m \geq 3$
in the first statement is necessary. Indeed, the $m = 2$ case is
covered by the second assertion, since every pair of numbers is
trivially in arithmetic progression.

The exceptional null set $\cN \subset (0,\infty)^m$ appearing in
Theorem~\ref{T1sidedexample}(1) has the following structure. If a tuple
$\balpha$ lies in the complement of $\cN$ in $(0,\infty)^m$ then $p \circ
\Lambda_\balpha$ is not a P\'olya frequency function for any
non-homothetic polynomial $p$.
The tuples in $\cN$ are precisely those of the form $( \alpha_1, \ldots,
\alpha_m )$ such that the coordinates of the reciprocal tuple $(
\alpha_1^{-1}, \ldots, \alpha_m^{-1} )$ are either $\mathbb{Q}$-linearly
independent or are roots of a countable family of non-zero polynomials
given in (\ref{Ef1f2}).
The $\mathbb{Q}$-linearly dependent tuples lie on a countable union of
hyperplanes, and the zero loci of the polynomials in~(\ref{Ef1f2}) also
lie on null sets.
The reciprocals of these tuples form the null set $\cN$.

As discussed in Section~\ref{Sprobability}, the Hirschman--Widder
densities are connected in multiple ways to classical probability theory.
In the present work, we link them to another area: the theory of
symmetric functions, and specifically Schur polynomials. In fact, this
also has a possible connection to probability theory: we show below that
these densities can be reconstructed from finitely many moments, via
symmetric function identities.

\begin{definition}\label{Dschur}
Given a field $\F$ of size at least $m \geq 2$, and a tuple
$\blambda = ( \lambda_1, \ldots, \lambda_m )$ of non-negative integers
$\lambda_1 \leq \cdots \leq \lambda_m$, we define the
corresponding \emph{Schur polynomial} to be the polynomial extension
of the function
\[
s_\blambda( a_1, \ldots, a_m ) := %
\frac{\det ( a_j^{\lambda_k} )_{j, k = 1}^m}{V( a_1, \ldots, a_m )}
\]
for distinct $a_1$, \ldots, $a_m \in \F$, where
$V( a_1, \ldots, a_m ) := \det( a_j^{k - 1} ) = %
\prod_{1 \leq j < k \leq m} ( a_k - a_j )$
is the usual Vandermonde determinant. If consecutive exponents are
equal, the Schur polynomial is identically zero.
\end{definition}

This definition differs from the one more commonly found in the
literature, such as \cite{Macdonald}, in that the entries of
$\blambda$ are non-decreasing rather than non-increasing and the
determinant in the numerator has exponent $\lambda_k$ instead of
$\lambda_k + n - k$. To switch between these two conventions is
straightforward: if $\nu_j = \lambda_{m - j + 1} - m + j$ then
$\bnu = ( \nu_1, \ldots, \nu_m )$ is such that
$\nu_1 \geq \cdots \geq \nu_m$ if and only if
$\lambda_1 < \cdots < \lambda_m$, in which case
$\widetilde{s}_\bnu( \ba ) = s_\blambda( \ba )$ for any
$\ba \in \F^m$, where $\widetilde{s}_\bnu$ is defined as in
\cite[(3.1)]{Macdonald}.

Schur polynomials are a distinguished family of symmetric functions,
and form a basis of homogeneous symmetric polynomials. They arise
naturally as characters of finite-dimensional irreducible
representations of $GL_{m + 1}( \C )$ or
$\mathfrak{sl}_{m + 1}( \C )$, and specialize to other families of
symmetric functions.

We can now make clear the connections between Hirschman--Widder
densities and Schur polynomials: both the Maclaurin coefficients and
the moments of these densities are given by Schur polynomials, and
are, in a certain sense, mirror images of one another.

\begin{theorem}\label{Tsymm}
Given $\balpha = ( \alpha_1, \ldots, \alpha_m ) \in ( 0, \infty )^m$,
where $m \geq 2$, the Hirschman--Widder density $\Lambda_\balpha$
is represented by its Maclaurin series on $[ 0, \infty )$,
with $n$th coefficient
\[
\Lambda_\balpha^{(n)}( 0^+ ) = \begin{cases}
 0 & \text{if } 0 \leq n \leq m - 2, \\
 ( -1 )^{n - m + 1} \alpha_1^{-1} \cdots \alpha_m^{-1}
s_{( 0, 1, \ldots, m - 2, n )}%
( \alpha_1^{-1}, \ldots, \alpha_m^{-1} ) & \text{if } n \geq m - 1.
\end{cases}
\]
The density $\Lambda_\balpha$ has $p$th moment
\[
\mu_p := \int_\R x^p \Lambda_\balpha( x ) \intd x = p! \, %
s_{( 0, 1, \ldots, m - 2, m - 1 + p )}(\alpha_1, \ldots, \alpha_m ) %
\qquad \text{if } p \geq 0.
\]
The parameter $\balpha$ can be recovered, up to permutation of its
entries, from the first $m$ moments, $\mu_1$, \ldots, $\mu_m$, and
also from the first $m + 1$ non-trivial Maclaurin coefficients,
$\Lambda_\balpha^{(m - 1)}( 0^+ )$, \ldots,
$\Lambda_\balpha^{(2 m - 1)}(0^+)$.
\end{theorem}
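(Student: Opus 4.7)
My plan is to derive a sum-of-exponentials representation for $\Lambda_\balpha$ on $[0,\infty)$ and to read off all three assertions from it via the bialternant formula for Schur polynomials. Assuming first that $\alpha_1,\dots,\alpha_m$ are distinct and setting $a_j:=\alpha_j^{-1}$, a partial fraction expansion of
\[
\prod_{j=1}^m(1+\alpha_j s)^{-1}=\prod_{j=1}^m\frac{a_j}{a_j+s}
\]
produces explicit coefficients $A_j$ involving $\prod_{k\ne j}(a_k-a_j)$, and termwise inversion using $\cB\{e^{-a_j x}\mathbf{1}_{[0,\infty)}\}(s)=(a_j+s)^{-1}$ yields $\Lambda_\balpha(x)=\sum_{j=1}^m A_j e^{-a_j x}$ for $x\ge 0$. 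Since each summand is entire in $x$, the Maclaurin series of the right-hand side converges on all of $\R$ and represents $\Lambda_\balpha$ on $[0,\infty)$. Differentiating at $0$ and integrating against $x^p$ on $[0,\infty)$ then give
\[
\Lambda_\balpha^{(n)}(0^+)=(-1)^n\sum_{j=1}^m A_j a_j^n,\qquad \mu_p=p!\sum_{j=1}^m A_j a_j^{-p-1}.
\]

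To recognise these as Schur polynomials I would use the bialternant identity, obtained by expanding $\det(a_j^{\lambda_k})_{j,k=1}^m$ for $\blambda=(0,1,\dots,m-2,n)$ along its last column and dividing by the Vandermonde:
\[
s_{(0,1,\dots,m-2,n)}(a_1,\dots,a_m)=(-1)^{m-1}\sum_{j=1}^m\frac{a_j^n}{\prod_{k\ne j}(a_k-a_j)}.
\]
This vanishes when $0\le n\le m-2$ (the numerator has two equal columns), matching the vanishing of the first $m-1$ Maclaurin coefficients. Substituting the identity into the formula for $\Lambda_\balpha^{(n)}(0^+)$ produces the advertised expression. For $\mu_p$, converting $a_j^{-p-1}=\alpha_j^{p+1}$ and rewriting $\prod_{k\ne j}(a_k-a_j)$ in terms of $\alpha_k-\alpha_j$ turns the moment sum into the same bialternant identity, now in the variables $(\alpha_1,\dots,\alpha_m)$ with exponent $m-1+p$, yielding $\mu_p=p!\,s_{(0,1,\dots,m-2,m-1+p)}(\balpha)$. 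The case of repeated entries in $\balpha$ follows by continuity: the Schur polynomials are polynomial in $\balpha$, and $\Lambda_\balpha$ varies continuously in $\balpha\in(0,\infty)^m$ as the inverse Laplace transform of a continuously varying rational function, so the identities extend from the open dense set of distinct-entry tuples.

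For recovery from moments, I would match the Maclaurin expansion $\cB\{\Lambda_\balpha\}(s)=\sum_{p\ge 0}(-1)^p\mu_p s^p/p!$ against $1/P(s)$, where $P(s):=\prod_j(1+\alpha_j s)=\sum_{k=0}^m e_k(\balpha)s^k$; equating $P(s)\cB\{\Lambda_\balpha\}(s)=1$ at $s^n$ for $1\le n\le m$ yields a lower-triangular linear system that uniquely determines $e_1(\balpha),\dots,e_m(\balpha)$ from $\mu_1,\dots,\mu_m$, hence $\balpha$ up to permutation. For recovery from Maclaurin coefficients, I would invoke the standard specialisation $s_{(0,1,\dots,m-2,n)}(a)=h_{n-m+1}(a)$ for $n\ge m-1$ (which follows from reindexing $\blambda$ to the partition $(n-m+1,0,\dots,0)$); then the $m+1$ values $\Lambda_\balpha^{(m-1)}(0^+),\dots,\Lambda_\balpha^{(2m-1)}(0^+)$ give $e_m(a)$ together with $h_1(a),\dots,h_m(a)$, and the generating-function identity $\sum_{k=0}^n(-1)^k e_k(a) h_{n-k}(a)=0$ for $n\ge 1$ recovers $e_1(a),\dots,e_{m-1}(a)$ recursively, determining $\balpha=(a_j^{-1})$ up to permutation. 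The principal technical step throughout is the bialternant identity, which simultaneously explains the vanishing of the first $m-1$ Maclaurin coefficients, the Schur form of the subsequent ones, and (after the change of variable $a_j\leftrightarrow\alpha_j$) the Schur form of the moments; the remainder is routine bookkeeping.
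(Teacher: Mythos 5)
Your proof is correct, and the main computational backbone — partial fraction decomposition of $\prod_j(1+\alpha_j s)^{-1}$ to get an explicit sum-of-exponentials representation, followed by recognition of the resulting alternating sums via the bialternant form of Schur polynomials, and a continuity argument to reach repeated $\alpha_j$ — is essentially the paper's own route. The paper packages the partial fraction coefficients through Lemma~\ref{Lvdm} and Proposition~\ref{P2} (where $c_j$ is expressed in terms of Vandermonde determinants) and then reads off the Schur polynomials by row/column manipulations on determinants, which is exactly what your cofactor expansion of $\det(a_j^{\lambda_k})$ along the last column produces; the two presentations are equivalent.

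Where you diverge, mildly, is the recovery step. The paper invokes the Jacobi--Trudi identity to express each $e_l(\balpha)$ as an explicit $l\times l$ Toeplitz determinant in the normalized moments $h_p = \mu_p/p!$, obtaining the polynomials $f_l$ and hence the generating polynomial $F(t)$ in closed form. You instead read the lower-triangular linear system for $e_1,\dots,e_m$ off the coefficient identity $P(s)\,\cB\{\Lambda_\balpha\}(s) = 1$, or equivalently the Newton-type recurrence $\sum_{k=0}^n(-1)^k e_k h_{n-k}=0$. These are two faces of the same algebra (Jacobi--Trudi is precisely the Cramer solution of that triangular system), so the argument is sound; your version is somewhat more elementary and shorter, at the cost of not producing the explicit determinantal formulas that the paper later reuses in its corollary on the truncated moment problem. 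The same trade-off applies to your recovery from Maclaurin coefficients, where you correctly note that $\Lambda_\balpha^{(m-1)}(0^+)=e_m(\ba)$ and that the subsequent ratios give $h_1(\ba),\dots,h_m(\ba)$, then close the loop by the same recurrence. Both routes are valid; the paper's produces intermediate objects (Toeplitz moment determinants) of independent interest, while yours gets to the recovery statement more directly.
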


It may seem incongruous to require $m$ moments but $m + 1$ Maclaurin
coefficients in order to recover the $m$ coordinates $\alpha_1$, \ldots,
$\alpha_m$.
However, this is explained by noting that
$\Lambda_\balpha^{(n)}(0^+)$ equals $( \alpha_1 \cdots \alpha_m )^{-1}$
times a polynomial in the reciprocal coordinates $\alpha_1^{-1}$, \ldots,
$\alpha_m^{-1}$, and this polynomial is $1$ if $n=m-1$. Thus, the
parameter $\balpha$ is recovered from the data
$\Lambda_\balpha^{(n)}(0^+) / \Lambda_\balpha^{(m-1)}(0^+)$ for
$n = m$, \ldots, $2 m - 1$.

To the best of our knowledge, the connections in Theorem~\ref{Tsymm}
between the moments, the Maclaurin coefficients, and the moment-recovery
problem for Hirschman--Widder densities have not previously been noted in
the literature. While the moments are computable from first principles
using probability theory, we provide another recipe: the
moment-generating function may be obtained by evaluating the
generating function of the complete homogeneous symmetric
polynomials. In a similar spirit, the moment-recovery problem is seen
to be intimately connected with the Jacobi--Trudi identity.

P\'olya frequency functions, and in particular the subclass of
Hirschman--Widder densities, form a foundational chapter within the wider
framework of totally positive kernels. This latter concept continues to
attract generations of mathematicians, with surprising new developments.
It is not the intention of the present article to touch on the many
ramifications and current discoveries in this subject, with one exception.
The first footnote in the note by Vershik and Kerov \cite{VK} contains the
following line (our translation):
\textit{``It is worth mentioning that, in works going back to the 30s,
Schoenberg, but also Krein and Gantmacher, and later Karlin, have
developed the theory of totally positive kernels and matrices. However, a
connection with the characters of the unitary group and Weyl's formula
was not remarked at that time.''}
That happened later, with a series of spectacular discoveries: Fourier
transforms of P\'olya frequency functions were rediscovered as
irreducible characters of representations of the infinite symmetric group
\cite{Thoma,Ok}, and independently as irreducible characters of unitary
representations of the infinite unitary group $U(\infty)$,
\cite{Voiculescu,Pickrell}. Moreover, the classification and explicit
expression of spherical functions associated to classical groups
\cite{GN,hc} also pointed to the same class of totally positive kernels.
The string of coincidental findings of new facets of the same object does
not stop here: for example, challenging computations of the
characteristic functions of non-central Wishart distributions in
multivariate statistics led to P\'olya frequency functions and Schur
polynomials \cite{James,Farrell,Takemura}. Nowadays, these advances are
part of group representation theory \cite{OV,VK,Faraut} or random matrix
theory \cite{KK,KKS,KR,FKK}. By reversing the arrow of discovery, we
reproduce without proof in Section~\ref{Sorbital} an orbital integral
which encodes analytic properties of Hirschman--Widder densities. Our
work remains at an independent, elementary level, and we will indicate
the simplifications this view provides.

\subsection{Acknowledgements}

We are grateful to the referees for their careful reading of this paper
and several helpful suggestions that have improved its contents and
clarified our presentation.

D.G.~was partially supported by a University of Delaware Research
Foundation grant, by a Simons Foundation collaboration grant for
mathematicians, and by a University of Delaware strategic initiative
grant.

A.K.~was partially supported by Ramanujan Fellowship grant
SB/S2/RJN-121/2017, MATRICS grant MTR/2017/000295, SwarnaJayanti
Fellowship grants SB/SJF/2019-20/14 and DST/SJF/MS/2019/3 from SERB
and DST (Govt.~of India), and by grant F.510/25/CAS-II/2018(SAP-I)
from UGC (Govt.~of India).

M.P.~was partially supported by a Simons Foundation collaboration
grant.

\section{Hirschman--Widder densities and symmetric functions}

In this section, we prove Theorem~\ref{Tsymm}. We begin by recalling
two approaches to constructing Hirschman--Widder densities, guided by
the original memoir~\cite{HW}.

\subsection{Two constructions of Hirschman--Widder densities}

Following Hirschman and Widder \cite{HW}, we first establish the
existence of their eponymous densities via the convolution product.

\begin{proposition}\label{P1}
Let $\balpha = ( \alpha_1, \ldots, \alpha_m ) \in (\Rtimes)^m$, where
$m \geq 2$.
\begin{enumerate}
\item The corresponding Hirschman--Widder density $\Lambda_\balpha$
exists and is unique.

\item The function $\Lambda_\balpha$ is both a P\'olya frequency
function and a probability density function.

\item The function $\Lambda_\balpha$ is one sided if and only
if the entries of $\balpha$ all have the same sign.
\end{enumerate}
\end{proposition}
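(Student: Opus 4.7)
My plan is to construct $\Lambda_\balpha$ as a finite convolution of one-sided exponentials and then extract all three assertions from this construction. For each $\alpha \in \Rtimes$, introduce the one-sided exponential
\[
E_\alpha( x ) := |\alpha|^{-1} e^{-x / \alpha} \quad \text{when } x \text{ has the same sign as } \alpha,
\]
and $E_\alpha( x ) := 0$ otherwise. A direct integration shows $E_\alpha$ is a probability density and that $\cB\{ E_\alpha \}( s ) = ( 1 + \alpha s )^{-1}$ on the half-plane $\Re s > -\alpha^{-1}$ when $\alpha > 0$ and on the half-plane $\Re s < -\alpha^{-1}$ when $\alpha < 0$; in either case the convergence region contains the imaginary axis. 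A Toeplitz minor of $E_\alpha$, after pulling a common exponential factor out of each row and each column, collapses up to a positive scalar to an indicator-weighted Vandermonde with a fixed sign, so $E_\alpha$ is itself a P\'olya frequency function; alternatively, this is immediate from Theorem~\ref{TschoenbergPF}.

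Set $\Lambda_\balpha := E_{\alpha_1} * \cdots * E_{\alpha_m}$. By the convolution theorem, the bilateral Laplace transform of $\Lambda_\balpha$ equals $\prod_{j = 1}^m ( 1 + \alpha_j s )^{-1}$ on the common strip of convergence of the factors, which is non-empty and contains a neighbourhood of the origin; matching against Definition~\ref{DHWdensity} and evaluating at $s = 0$ gives $\int_\R \Lambda_\balpha = 1$, so $\Lambda_\balpha$ is a probability density. Uniqueness follows from the injectivity of the bilateral Laplace transform on its strip of convergence. The P\'olya frequency property is preserved by convolution --- a classical consequence of the Cauchy--Binet formula applied to composition of totally non-negative kernels --- and this propagates from the factors $E_{\alpha_j}$ to $\Lambda_\balpha$, establishing assertions (1) and (2).

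For assertion (3), I would use the fact that the support of a finite convolution is the closure of the Minkowski sum of the individual supports. If all entries of $\balpha$ share a common sign, each $E_{\alpha_j}$ is supported in the same closed half-line, so $\Lambda_\balpha$ is one sided. Conversely, if $\balpha$ contains both positive and negative entries, pick $\alpha_j > 0$ and $\alpha_k < 0$; an explicit computation shows that $E_{\alpha_j} * E_{\alpha_k}$ is strictly positive at every $x \in \R$, since for each shift $x$ the integrand in the convolution integral is non-zero on a set of positive Lebesgue measure. Convolving with the remaining non-negative unit-mass densities then preserves positivity, so $\Lambda_\balpha$ is positive on all of $\R$ and hence not one sided.

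The only non-elementary ingredient is the preservation of total non-negativity under convolution, which is classical. The main bookkeeping obstacle is tracking the Laplace convergence region when $\balpha$ has mixed signs, since the half-plane formulation in Definition~\ref{DHWdensity} really describes a vertical strip in that case; once this is handled, all remaining steps reduce to routine integral computations and support arithmetic.
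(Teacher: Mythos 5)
Your proposal follows essentially the same route as the paper: construct $\Lambda_\balpha$ as the convolution $\varphi_{\alpha_1} * \cdots * \varphi_{\alpha_m}$ of one-sided exponentials (your $E_\alpha$ is exactly the paper's $\varphi_\alpha$), invoke closure of P\'olya frequency functions under convolution, read off the density property by evaluating the Laplace transform at the origin, and handle part~(3) by noting that the convolution of two exponentials of opposite orientation is everywhere positive. One small imprecision: the determinant obtained after pulling exponential factors out of the Toeplitz minor of $E_\alpha$ is a $0$--$1$ staircase matrix (triangular when non-singular), not a Vandermonde, though your citation of Theorem~\ref{TschoenbergPF} as the alternative justification is fine; and for uniqueness the paper is slightly more careful than ``injectivity of the bilateral Laplace transform,'' using the Fourier--Mellin inversion formula, valid here because $m \geq 2$ makes $t \mapsto \prod_j (1 + \I\alpha_j t)^{-1}$ integrable on $\R$.
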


\begin{proof}
If $\alpha > 0$ then
\[
\varphi_\alpha : \R \to [ 0, \infty ); \ x \mapsto \begin{cases}
 0 & \text{if } x < 0, \\
 \alpha^{-1} e^{-\alpha^{-1} x} & \text{if } x \geq 0
\end{cases}
\]
is a P\'olya frequency function \cite[(3)]{Schoenberg47}, and if
$\alpha < 0$ then $\varphi_\alpha : x \mapsto \varphi_{-\alpha}( -x )$
is also a P\'olya frequency function. The function
\begin{equation}\label{Econvolution}
\Lambda_\balpha := %
\varphi_{\alpha_1} \ast \cdots \ast \varphi_{\alpha_m}
\end{equation}
is continuous, being a convolution product, and its bilateral Laplace
transform is as required, since
$\cB\{ \varphi_\alpha \}( s ) = ( 1 + \alpha s )^{-1}$.

The class of P\'olya frequency functions is closed under convolution
\cite[Lemma~5]{Schoenberg51}, and so~$\Lambda_\balpha$ is a
P\'olya frequency function. It is a probability density because
$\cB\{ \Lambda_\balpha \}( 0 ) = 1$.

For uniqueness, we note that any continuous function with prescribed
bilateral Laplace transform $F$ such that $t \mapsto F( \I t )$ is
integrable can be recovered everywhere via the Fourier--Mellin
integral: here,
\begin{equation}\label{Emellin}
\Lambda_\balpha( x ) = \frac{1}{2 \pi \I} %
\int_{-\I \infty}^{+\I \infty} %
\frac{e^{x s}}{( 1 + \alpha_1 s ) \cdots ( 1 + \alpha_m s )} %
\intd s \qquad ( x \in \R ).
\end{equation}

If $\alpha_j$ and $\alpha_k$ have opposite signs then a short calculation
shows that $( \varphi_{\alpha_j} \ast \varphi_{\alpha_k} )( x ) > 0$ for
any $x \in \R$. Furthermore, if $f$ is continuous and positive on $\R$
then $\varphi_\alpha \ast f$ is also continuous and positive on $\R$,
for any $\alpha \in \Rtimes$. This gives one part of (3) and the
converse is immediate.
\end{proof}

As the Laplace transform does not behave well for the product given by
pointwise multiplication, it is useful to have a second construction
for the function $\Lambda_\balpha$.

Given one-sided exponentials
\[
\bJ_{x \geq 0} \, e^{-a_1 x}, \ldots, \bJ_{x \geq 0} \, e^{-a_k x }, %
\qquad \text{where } k \geq 2 \text{ and } 0 < a_1 < \cdots < a_k,
\]
there are, up to homothety, only finitely many choices of real
coefficients $c_1$, \ldots, $c_k$ such that the linear combination
$\bJ_{x \geq 0} \sum_{j = 1}^k c_j e^{-a_j x}$ is a P\'olya frequency
function. More generally, we have the following result, where each
coefficient $c_j$ may be a polynomial.

\begin{definition}
For a tuple of positive real numbers $\ba = ( a_1, \ldots, a_k )$ such
that $k \geq 1$ and $a_1 < \cdots < a_k$, and a tuple of real
polynomials $\bc = ( c_1, \ldots, c_k )$, let
\[
\Lambda_{\ba, \bc} : \R \to \R; \ x \mapsto \begin{cases}
 \sum_{j = 1}^k c_j( x ) e^{-a_j x} & \text{if } x \geq 0, \\
 0 & \text{if } x < 0.
\end{cases}
\]
\end{definition}

We let $\deg p$ denote the degree of the polynomial $p$, with
$\deg p := -\infty$ if $p = 0$.

\begin{proposition}\label{Phirschmanwidder}
Suppose $\ba = ( a_1, \ldots, a_k )$ is a tuple of positive real numbers
such that $k \geq 1$ and $a_1 < \cdots < a_k$.
\begin{enumerate}
\item Given any tuple of non-negative integers
$\bn = ( n_1, \ldots, n_k )$, not all zero, there exists a unique
tuple $\bc = \bc^{\ba, \bn}
 = (\bc^{\ba, \bn}_1, \ldots, \bc^{\ba, \bn}_k)$
of real polynomials such that
\begin{equation}\label{EHWv2}
\cB\{ \Lambda_{\ba, \bc} \}( s ) = %
\prod_{j = 1}^k ( 1 + \alpha_j s )^{-n_j}, %
\quad \text{where } \alpha_j := a_j^{-1} %
\text{ for } j = 1, \ldots, k.
\end{equation}
The tuple $\bc^{\ba, \bn}$ is such that
$\deg c^{\ba, \bn}_j \leq n_j - 1$ for all $j$.

In particular, if
$\balpha = ( \alpha_1, \ldots, \alpha_m ) \in ( 0, \infty )^m$,
$\ba = ( a_1, \ldots, a_k ) \in ( 0, \infty )^k$ and
$\bm = ( m_1, \ldots, m_k )$ are such that $a_1 < \cdots < a_k$ and
$a_j^{-1}$ appears exactly $m_j$ times in $\balpha$, with
$m_1 + \cdots + m_k = m \geq 2$, then
the function $\Lambda_{\ba, \bc}$ with $\bc = \bc^{\ba, \bm}$ is the
Hirschman--Widder density $\Lambda_\balpha$.

\item If the tuple of real polynomials $\bc$ is not of the form
$t \bc^{\ba, \bn}$ for some $t > 0$ and some non-zero tuple of
non-negative integers $\bn$, then $\Lambda_{\ba, \bc}$ is not a
P\'olya frequency function.
\end{enumerate}
\end{proposition}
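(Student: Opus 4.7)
My plan is to reduce both parts of the proposition to a matching of rational Laplace transforms against Schoenberg's classification in Theorem~\ref{TschoenbergPF}.

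For part~(1), the elementary identity
\[
\int_0^\infty x^i e^{-(a_j+s)x}\intd x = \frac{i!}{(s+a_j)^{i+1}},
\qquad \Re s > -a_j,
\]
shows directly that $\cB\{\Lambda_{\ba,\bc}\}(s)$ is a rational function whose only poles lie in $\{-a_j : c_j\neq 0\}$, with the pole at $-a_j$ of order at most $\deg c_j + 1$. To realise the prescribed transform $\prod_j(1+\alpha_j s)^{-n_j} = \prod_j a_j^{n_j}(s+a_j)^{-n_j}$, I would take the (unique) partial fraction decomposition
\[
\prod_{j=1}^k \frac{a_j^{n_j}}{(s+a_j)^{n_j}} = \sum_{j=1}^k\sum_{i=1}^{n_j} \frac{b_{j,i}}{(s+a_j)^i}
\]
and set $c_j(x) := \sum_{i=1}^{n_j} b_{j,i}\, x^{i-1}/(i-1)!$, which yields $\Lambda_{\ba,\bc}$ of the desired form with $\deg c_j \leq n_j - 1$. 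Uniqueness of $\bc^{\ba,\bn}$ reduces to uniqueness of partial fractions (equivalently, linear independence of $\{x^i e^{-a_j x}\}$). In the identification case $\bc = \bc^{\ba,\bm}$, both $\Lambda_{\ba,\bc^{\ba,\bm}}$ and the Hirschman--Widder density $\Lambda_\balpha$ have the same bilateral Laplace transform, and the Fourier--Mellin formula~(\ref{Emellin}) identifies them as continuous functions on $\R$. Continuity of $\Lambda_{\ba,\bc^{\ba,\bm}}$ at $0$ amounts to $\sum_j b_{j,1} = 0$, which follows from the vanishing-at-infinity of the rational function of order $m\geq 2$.

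For part~(2), I would run Schoenberg's theorem in reverse. Suppose that $\Lambda_{\ba,\bc}$ is a P\'olya frequency function; since it is supported in $[0,\infty)$ it is one-sided, so by Theorem~\ref{TschoenbergPF} the reciprocal $1/\cB\{\Lambda_{\ba,\bc}\}$ extends to an entire function of the form
\[
\Psi(s) = C e^{\delta s}\prod_{j\geq 1}(1+\beta_j s),
\qquad C>0,\ \delta\geq 0,\ \beta_j\geq 0,\ \sum_j\beta_j<\infty.
\]
By the part~(1) computation, however, $\cB\{\Lambda_{\ba,\bc}\}$ is a rational function of $s$, so $\Psi$ is too, and an entire rational function is necessarily a polynomial. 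This forces $\delta=0$ and collapses the infinite product to a finite one. The zeros of $\Psi$, that is, the poles of $\cB\{\Lambda_{\ba,\bc}\}$, lie entirely in $\{-a_1,\ldots,-a_k\}$, so collecting multiplicities yields
\[
\cB\{\Lambda_{\ba,\bc}\}(s) = \frac{1}{C}\prod_{j=1}^k(1+\alpha_j s)^{-n_j}
\]
for some non-negative integers $n_1,\ldots,n_k$, not all zero (as $\Lambda_{\ba,\bc}$ is nontrivial). Applying part~(1) with $t := 1/C > 0$ and using its uniqueness assertion then gives $\bc = t\,\bc^{\ba,\bn}$, contradicting the hypothesis.

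The main conceptual step, rather than any analytic obstacle, is recognising that the rational structure of $\cB\{\Lambda_{\ba,\bc}\}$ forces Schoenberg's generically infinite Laguerre--P\'olya product to collapse to a finite product whose roots are restricted to $\{-a_j\}$; once this collapse is observed, the remainder of part~(2) is a bookkeeping exercise using part~(1), and no hard estimates appear.
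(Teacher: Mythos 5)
Your proposal is correct and follows essentially the same route as the paper: part~(1) via partial fractions, the transform of $x^{l-1}e^{-a x}$, linear independence of $\{x^i e^{-a_j x}\}$ for uniqueness, and the degree-$m$ decay for continuity at $0$; part~(2) via the rationality of $\cB\{\Lambda_{\ba,\bc}\}$ forcing Schoenberg's Laguerre--P\'olya entire function to be a polynomial with zeros restricted to $\{-a_1,\ldots,-a_k\}$. Your contrapositive phrasing of part~(2) is slightly more explicit than the paper's about why the collapse gives a \emph{positive} constant $t = 1/C$ (the sign is inherited from $C > 0$ in the Laguerre--P\'olya form), which is a small point the paper passes over implicitly.
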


\begin{proof}
For (1), partial-fraction decomposition gives a family of real
coefficients $c^{\ba, \bn}_{j,l}$ such that
\[
\prod_{j = 1}^k ( 1 + \alpha_j s )^{-n_j} = %
\sum_{j = 1}^k \sum_{l = 1}^{n_j} %
c^{\ba, \bn}_{j, l} ( 1 + \alpha_j s )^{-l}.
\]
Furthermore, if $a > 0$ and $\alpha := a^{-1}$ then
\[
\cB\{ \bJ_{x \geq 0} \, x^{l - 1} e^{-a x} \}( s ) = %
( l - 1 )! \, \alpha^l ( 1 + \alpha s )^{-l} \qquad ( l = 1, 2, \ldots ).
\]
It follows that setting
\begin{equation}\label{Ecan}
c^{\ba, \bn}_j( x ) := \sum_{l = 1}^{n_j} %
\frac{a_j^l c^{\ba, \bn}_{j, l}}{( l - 1 )!} x^{l - 1} %
\qquad ( j = 1, \ldots k )
\end{equation}
gives the existence of $\bc^{\ba, \bn}$ as required. Uniqueness
follows by Lerch's theorem: if
$\cB\{ \Lambda_{\ba, \bc} \} \equiv \cB\{ \Lambda_{\ba, \bc'} \}$ on
some half plane then $\Lambda_{\ba, \bc} = \Lambda_{\ba, \bc'}$, as
both restrict to continuous functions on $[ 0, \infty )$ and vanish
elsewhere. Hence it suffices to show that
\[
\sum_{j = 1}^k p_j( x ) e^{-a_j x} \equiv 0 \implies %
p_1( x ) = \cdots = p_k( x ) \equiv 0
\]
for any real polynomials $p_1$, \ldots, $p_k$, but this follows
because
\[
\sum_{j = 1}^k p_j( x ) e^{-a_j x} \equiv 0 \implies %
p_1( x ) + \sum_{j = 2}^k p_j( x ) e^{( a_1 -a_j ) x} \equiv 0 %
\implies %
\lim_{x \to \infty} p_1( x ) = 0,
\]
whence $p_1 = 0$, and so on. For the final claim, it suffices to show
that $\Lambda_{\ba, \bc}$ is continuous at the origin, that is,
\[
0 = \sum_{j = 1}^k c_j^{\ba, \bm}( 0 ) = %
\sum_{j : m_j > 0} a_j c^{\ba, \bm}_{j, 1},
\]
where the second equality comes from (\ref{Ecan})
and the second sum is taken over those $j$ for which~$m_j$ is positive.
This sum vanishes because
\[
\sum_{j : m_j > 0} a_j c^{\ba, \bm}_{j, 1} = %
\lim_{s \to \infty} \sum_{j = 1}^k \sum_{l = 1}^{m_j}%
c^{\ba, \bm}_{j, l} s ( 1 + \alpha_j s )^{-l} = %
\lim_{s \to \infty} s \prod_{j = 1}^k ( 1 + \alpha_j s )^{-m_j} = 0.
\]

For (2), suppose $\bc$ is not of the form $t \bc^{\ba, \bn}$ as
asserted. Then $\cB\{ \Lambda_{\ba,\bc} \}( s )$ is a rational
function of the form
$p( s ) / \prod_{j = 1}^k ( 1 + \alpha_j s )^{m_j}$, with the
numerator polynomial $p$ not a factor of the denominator. Hence the
reciprocal of $\cB\{ \Lambda_{\ba, \bc} \}$ is not the restriction of
a function belonging to the Laguerre--P\'olya class. By
Theorem~\ref{TschoenbergPF}, $\Lambda_{\ba,\bc}$ is not a P\'olya
frequency function.
\end{proof}

\subsection{Vandermonde matrices and closed-form expressions}

In the first part of Theorem~\ref{T1sidedexample}, the generic tuple
$\balpha$ will be taken to have distinct entries. We may assume
without loss of generality that these are strictly decreasing, whence
the entries of the corresponding reciprocal tuple $\ba$ are strictly
increasing. The next result provides a closed-form expression for the
unique tuple $\bc$ such that $\Lambda_\balpha = \Lambda_{\ba, \bc}$ is
a Hirschman--Widder density. Note first that
Proposition~\ref{Phirschmanwidder} implies that $\bc$ consists of
polynomials of degree zero, that is, constants.

\begin{proposition}\label{P2}
Let $\balpha \in ( 0, \infty )^m$ be such that $m \geq 2$ and
$\alpha_1 > \cdots > \alpha_m$, and let $\ba = ( a_1, \ldots, a_m )$,
where $a_j := \alpha_j^{-1}$ for $j = 1$, \ldots, $m$. The
Hirschman--Widder density $\Lambda_\balpha = \Lambda_{\ba, \bc}$,
where $\bc = ( c_1, \ldots, c_m ) \in (\Rtimes)^m$ is such that
\[
c_j = a_j \prod_{k \neq j} \frac{a_k}{a_k - a_j} \qquad %
( j = 1, \ldots, m ).
\]
In particular, the constants $c_1$, \ldots, $c_m$ alternate in sign
and sum to zero.
\end{proposition}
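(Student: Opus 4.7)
The plan is to apply Proposition~\ref{Phirschmanwidder} with $\bn = (1,\ldots,1)$ and then carry out the partial-fraction decomposition explicitly. Since $\ba$ has distinct increasing entries and each multiplicity $m_j = 1$, Proposition~\ref{Phirschmanwidder}(1) guarantees that $\Lambda_\balpha = \Lambda_{\ba,\bc}$ for a unique tuple of polynomials $\bc$ with $\deg c_j \leq 0$, so each $c_j$ is a constant. Identifying $c_j$ is the main computational step, but it is a direct residue calculation: the partial-fraction expansion reads
\[
\prod_{k=1}^m (1 + \alpha_k s)^{-1} = \sum_{j=1}^m \frac{c^{\ba,\bn}_{j,1}}{1 + \alpha_j s},
\]
and multiplying through by $(1+\alpha_j s)$ and evaluating at $s = -a_j$ yields $c^{\ba,\bn}_{j,1} = \prod_{k \neq j} (1 - \alpha_k/\alpha_j)^{-1}$. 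The formula $c_j = a_j c^{\ba,\bn}_{j,1}$ from the proof of Proposition~\ref{Phirschmanwidder}, together with the elementary rewriting $\alpha_j/(\alpha_j - \alpha_k) = a_k/(a_k - a_j)$, then yields the claimed closed-form expression for $c_j$.

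For the sign pattern, I would use that $a_1 < \cdots < a_m$: in the product $\prod_{k \neq j}(a_k - a_j)$ exactly $j - 1$ factors are negative (those with $k < j$), so the denominator has sign $(-1)^{j-1}$; since all $a_\ell$ are positive, the numerator $a_j \prod_{k \neq j} a_k$ is positive, giving $\mathrm{sign}(c_j) = (-1)^{j-1}$. Hence the $c_j$ alternate in sign, as asserted.

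For the vanishing of $\sum_j c_j$, the cleanest route uses the continuity of $\Lambda_\balpha$ on $\R$ already established in Proposition~\ref{P1}: since $\Lambda_{\ba,\bc}(0^-) = 0$ and $\Lambda_{\ba,\bc}(0^+) = \sum_{j=1}^m c_j$, continuity at the origin forces $\sum_j c_j = 0$. Alternatively, one may multiply the partial-fraction identity by $s$, let $s \to \infty$, and observe that the left-hand side tends to $0$ because $m \geq 2$ while the right-hand side tends to $\sum_j a_j c^{\ba,\bn}_{j,1} = \sum_j c_j$.

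There is no real obstacle here; the argument is essentially bookkeeping around a standard partial-fraction identity. The only point that requires a small amount of care is matching the normalization conventions between the constant $c^{\ba,\bn}_{j,1}$ appearing in the partial-fraction decomposition of the Laplace transform and the polynomial coefficient $c_j$ appearing in the additive representation of $\Lambda_{\ba,\bc}$; the factor $a_j$ arising from $\cB\{\bJ_{x \geq 0}\, e^{-a_j x}\}(s) = a_j^{-1}(1 + \alpha_j s)^{-1}$ must be tracked correctly, but this relationship is already recorded in the proof of Proposition~\ref{Phirschmanwidder}.
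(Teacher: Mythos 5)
Your proof is correct, but it takes a noticeably different computational route from the paper's. You obtain the coefficients $c_j$ by a direct residue calculation in the partial-fraction identity (multiply through by $1 + \alpha_j s$, evaluate at $s = -a_j$), and then identify $c_j = a_j c^{\ba,\bn}_{j,1}$ using the normalization recorded in the proof of Proposition~\ref{Phirschmanwidder}. The paper instead \emph{defines} $\bc$ by the closed formula, rewrites each $c_j$ in terms of Vandermonde determinants as $c_j = (-1)^{j-1}\,V(\widehat{\ba}_j)\,\prod_k a_k\,/\,V(\ba)$, and then verifies that $\cB\{\Lambda_{\ba,\bc}\}$ matches the target by invoking Lemma~\ref{Lvdm} (the alternating Vandermonde polynomial identity) with $l = 0$. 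This choice is deliberate: the paper states it is giving a proof different from that of Hirschman and Widder precisely to expose the connection to symmetric functions and to set up the later use of Lemma~\ref{Lvdm} and the $E(\ba)$ matrix in Remark~\ref{Resp}. Your residue argument is the more classical and elementary one, and it buys you brevity; the paper's argument buys conceptual unity with the symmetric-function theme running through Section~2. Your sign argument (count the $j-1$ negative factors $a_k - a_j$ with $k < j$) is equivalent to the paper's, and both of your two suggested arguments for $\sum_j c_j = 0$ (continuity at the origin, or the $s \to \infty$ limit) are sound; the paper simply cites the $s \to \infty$ computation already carried out in the proof of Proposition~\ref{Phirschmanwidder}(1).
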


Here we provide a different proof to that of Hirschman and Widder
\cite[Section~X.2.2]{HW} and so demonstrate a connection between these
densities and the theory of symmetric functions. We employ an
algebraic lemma involving alternating polynomials.

In the following definition and lemma, we let $\F$ denote an arbitrary
field.

\begin{definition}\label{Dhat}
Given any $\ba := ( a_1, \ldots, a_m ) \in \F^m$, where $m \geq 2$,
let $\widehat{\ba}_j \in \F^{m - 1}$ be obtained by removing the $j$th
term from $\ba$, so that
\[
\widehat{\ba}_j := %
( a_1, \ldots, a_{j - 1}, a_{j + 1}, \ldots, a_m ) \in \F^{m - 1} %
\qquad ( j \in 1, \ldots, m ).
\]
Recall that $V( \ba ) := \prod_{1 \leq j < k \leq m} ( a_k - a_j )$ is
the usual Vandermonde determinant.
\end{definition}

\begin{lemma}\label{Lvdm}
Given any $\ba \in \F^m$, with $m \geq 2$, the following identity holds
in the polynomial ring $\F[ X ]$:
\[
V( \ba ) X^l = %
\sum_{j = 1}^m ( -1 )^{j + l - 1} a_j^l V( \widehat{\ba}_j ) %
\prod_{k \neq j} ( X + a_k ) \qquad ( l = 0, \ldots, m - 1 ).
\]
\end{lemma}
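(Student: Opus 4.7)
The plan is to reduce the claimed polynomial identity to a Lagrange-interpolation style calculation. Since both sides are polynomial expressions in the variables $(X, a_1, \ldots, a_m)$ with integer coefficients, it suffices to verify the identity under the additional hypothesis that $a_1, \ldots, a_m$ are pairwise distinct; the general case then follows by specialisation of a polynomial identity over $\Z[a_1, \ldots, a_m, X]$. If $\F$ is too small to contain $m$ distinct elements, one may pass to an algebraic extension, since the identity being a polynomial relation over $\F$ is unaffected.

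Under the distinctness hypothesis, both sides are polynomials in $X$ of degree at most $m - 1$: each factor $\prod_{k \neq j}(X + a_k)$ on the right has degree $m - 1$, while the left has degree $l \leq m - 1$. It therefore suffices to check agreement at the $m$ distinct points $X = -a_i$ for $i = 1, \ldots, m$.

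At $X = -a_i$, the left-hand side becomes $(-1)^l a_i^l V(\ba)$. On the right, the product $\prod_{k \neq j}(-a_i + a_k)$ vanishes whenever $j \neq i$, because then the factor indexed by $k = i$ is zero; hence only the $j = i$ summand survives, giving
\[
(-1)^{i + l - 1} a_i^l V(\widehat{\ba}_i) \prod_{k \neq i} (a_k - a_i).
\]
The key observation is the standard Vandermonde factorisation
\[
V(\ba) = (-1)^{i - 1} V(\widehat{\ba}_i) \prod_{k \neq i} (a_k - a_i),
\]
obtained by extracting from $V(\ba)$ those factors in which $a_i$ appears and flipping the sign in each of the $i - 1$ factors $(a_i - a_j)$ with $j < i$. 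Substituting this into the previous display converts the right-hand side value at $X = -a_i$ into $(-1)^{i + l - 1}(-1)^{i - 1} a_i^l V(\ba) = (-1)^l a_i^l V(\ba)$, matching the left-hand side.

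The only real obstacle is careful sign-bookkeeping in the Vandermonde factorisation; once that is settled the result follows, since two polynomials of degree at most $m - 1$ that agree at $m$ distinct points must coincide.
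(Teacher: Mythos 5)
Your proof is correct. For the case of distinct entries, your argument is essentially the paper's: both sides are polynomials of degree at most $m-1$ in $X$, so it suffices to check agreement at the $m$ points $X = -a_i$, and this reduces to the Vandermonde factorization $V(\ba) = (-1)^{i-1} V(\widehat{\ba}_i) \prod_{k \neq i}(a_k - a_i)$, whose sign you track correctly.

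Where you genuinely diverge is in handling the degenerate case of repeated entries. The paper addresses it head-on: if $a_p = a_q$ with $p \neq q$, then $V(\ba) = 0$ on the left, and on the right all summands with $j \notin \{p, q\}$ vanish while the two remaining summands cancel, since $\widehat{\ba}_q$ is obtained from $\widehat{\ba}_p$ by $|p - q| - 1$ transpositions, giving a sign $(-1)^{|p-q|-1}$ that matches $(-1)^{q-p}$ up to the parity already built into the formula. You instead observe that the claimed identity has integer coefficients as a polynomial relation in the indeterminates $a_1, \ldots, a_m, X$, so it suffices to verify it over $\Q(a_1, \ldots, a_m)$ where the variables are automatically distinct, after which it descends to $\Z[a_1, \ldots, a_m][X]$ and specializes to any field $\F$. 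This is cleaner conceptually and avoids the transposition bookkeeping, though it requires the reader to accept the universal-polynomial specialization principle; the paper's direct cancellation argument is more self-contained and stays inside $\F^m$. One small remark on your write-up: the parenthetical about passing to an algebraic extension of $\F$ is actually unnecessary once you phrase the argument over $\Z[a_1, \ldots, a_m]$, since the verification there never evaluates the $a_i$ at elements of $\F$ at all.
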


\begin{proof}[Proof 1]
Suppose first that not all the entries of $\ba$ are distinct, say
$a_p = a_q$ for $p \neq q$. Then $V( \ba )$ vanishes, as do the
summands on the right-hand side for $j$ not equal to~$p$ or~$q$,
whereas the remaining two summands cancel each other, since
$\widehat{\ba}_q$ can be obtained from $\widehat{\ba}_p$ by
$| p - q | - 1$ transpositions.

We now assume that $\ba$ has distinct entries. Since both sides are
polynomials in $X$ of degree at most $m - 1$, it suffices to show they
agree at $-a_p$ for $p = 1$, \ldots, $m$. However, when evaluated
at~$-a_p$, the right-hand side reduces to
\[
( -a_p )^l V( \widehat{\ba}_p ) \prod_{k < p} ( a_p - a_k ) %
\prod_{k > p} ( a_k - a_p ),
\]
which is precisely $V( \ba ) ( -a_p )^l$.
\end{proof}

\begin{proof}[Proof 2]
An alternative argument, suggested by one of the referees, goes as
follows. The right-hand side of the proposed identity equals $(-1)^l \det
B$, as seen by expanding along the first column, where
\begin{align*}
B & = \begin{pmatrix}
a_1^l & X + a_1 & a_1 ( X + a_1 ) & \cdots & a_1^{m-2} ( X + a_1 )\\
a_2^l & X + a_2 & a_2 ( X + a_2 ) & \cdots & a_2^{m-2}( X + a_2 )\\
a_3^l & X + a_3 & a_3 ( X + a_3 ) & \cdots & a_3^{m-2}( X + a_3 )\\
\vdots & \vdots & \vdots & \ddots & \vdots\\
a_{m-1}^l & X + a_{m-1} & a_{m-1} ( X + a_{m-1} ) & \cdots &
a_{m-1}^{m-2} ( X + a_{m-1} )
\end{pmatrix}\\
 & = \begin{pmatrix}
1 & a_1 & \cdots & a_1^{m-1}\\
1 & a_2 & \cdots & a_2^{m-1}\\
\vdots & \vdots & \ddots & \vdots\\
1 & a_m & \cdots & a_m^{m-1}
\end{pmatrix}
( \be_{l+1}, X \be_1 + \be_2, X \be_2 + \be_3, \ldots, X \be_{m-1} +
\be_m )
\end{align*}
and $\{ \be_j : 1 \leq j \leq m \}$ is the standard basis of $\F^m$
written as column vectors.
Expanding the determinant of the final matrix along the first column
yields $(-1)^{( l + 1 ) + 1}$ times the determinant of a $2 \times 2$
block diagonal matrix, whose $(1,1)$ block is lower triangular with all
$l$ diagonal entries equal to $X$ and whose $(2,2)$ block is upper
triangular with all $m-l-1$ diagonal entries equal to $1$.
Hence, the right-hand side of the identity equals
\[
(-1)^l \det B = (-1)^l V(\ba) (-1)^l X^l = V(\ba) X^l,
\]
as claimed.
\end{proof}

\begin{remark}\label{Resp}
Lemma~\ref{Lvdm} can be applied to compute the inverse of the matrix
\begin{equation}\label{Esymfn}
E( \ba ) := \begin{pmatrix}
 e_0( \widehat{\ba}_1 ) & e_0( \widehat{\ba}_2 ) & \cdots & %
e_0( \widehat{\ba}_m ) \\
 e_1( \widehat{\ba}_1 ) & e_1( \widehat{\ba}_2 ) & \cdots & %
e_1( \widehat{\ba}_m ) \\
 \vdots & \vdots & \ddots  & \vdots \\
 e_{m - 1}( \widehat{\ba}_1 ) &  e_{m - 1}( \widehat{\ba}_2 ) & %
\cdots & e_{m - 1}( \widehat{\ba}_m )
\end{pmatrix},
\end{equation}
where $\ba = ( a_1, \ldots, a_m )$ contains distinct elements of the
field $\F$ and $e_l$ is the elementary symmetric polynomial
\begin{equation}\label{Eelemsymm}
e_0 \equiv 1 \qquad \text{and} \qquad %
e_l( b_1, \ldots, b_n ) := %
\sum_{1 \leq j_1 < j_2 < \cdots < j_l \leq n} %
b_{j_1} b_{j_2} \cdots b_{j_l}.
\end{equation}
These are Schur polynomials in the sense of Definition~\ref{Dschur},
with $e_{m - l}$ equal to $s_\blambda$ if
$\blambda := ( 0, 1, \ldots, l - 1, l + 1, \ldots, m )$; see
\cite[(3.9)]{Macdonald}.

A direct computation gives that
\begin{equation}\label{Evandermonde-inv}
E( \ba )^{-1} = ( -1 )^{m - 1} V( \ba )^{-1} D_\pm %
\begin{pmatrix}
 a_1^{m - 1} & a_2^{m - 1} & \cdots & a_m^{m - 1}\\
 a_1^{m - 2} & a_2^{m - 2} & \cdots & a_m^{m - 2}\\
 \vdots & \vdots & \ddots & \vdots\\
 1 & 1 & \cdots & 1
 \end{pmatrix} D_V D_\pm,
\end{equation}
where
\[
D_\pm := \diag( 1, -1, 1, -1, \ldots) %
\quad \text{and} \quad %
D_V := \diag\bigl( V( \widehat{\ba}_1 ), \ldots, %
V( \widehat{\ba}_m ) \bigr).
\]
The matrix identity~(\ref{Evandermonde-inv}) provides a closed-form
expression for the inverse of a standard Vandermonde matrix over any
field. This formula can also be deduced from an alternative expression
for the inverse; see~\cite{MS}.

A further consequence of (\ref{Evandermonde-inv}) is the following
expression for the determinant of~$E( \ba )$:
\begin{equation}
\det E( \ba ) = (-1)^{m ( m - 1 ) / 2} V( \ba ).
\end{equation}
This formula was proved by a different method
in~\cite[pp.~41--42]{Macdonald}.
\end{remark}

With Lemma~\ref{Lvdm} at hand, we now obtain the aforementioned
closed-form expression for the Hirschman--Widder density.

\begin{proof}[Proof of Proposition~\ref{P2}]
Let $\balpha$ and $\ba$ be as in the statement of the Proposition, and
define $\bc$ by letting
\begin{equation}\label{Evdm}
c_j := a_j \prod_{k \neq j} \frac{a_k}{a_k - a_j} = %
\frac{( -1 )^{j - 1} V( \widehat{\ba}_j )}{V( \ba )} %
\prod_{k = 1}^m a_k \qquad ( j = 1, \ldots, m ).
\end{equation}
The bilateral Laplace transform
\[
\cB\{ \Lambda_{\ba, \bc} \}( s ) = %
\sum_{j = 1}^m %
\frac{( -1 )^{j - 1} V( \widehat{\ba}_j)}{V(\ba) (s + a_j)} %
\prod_{k = 1}^m a_k = %
\frac{a_1 \cdots a_m}{( s + a_1 ) \cdots ( s + a_m )} = %
\cB\{ \Lambda_\balpha \}( s ),
\]
by Lemma~\ref{Lvdm} with $l = 0$ and $X = s$. We therefore conclude
that $\Lambda_\balpha = \Lambda_{\ba, \bc}$, by
Proposition~\ref{Phirschmanwidder}(1). That $c_1$, \ldots, $c_m$ are
alternating follows because $c_j$ has the same sign as
$( -1 )^{j -1}$, and that they sum to zero was shown in the proof of
Proposition~\ref{Phirschmanwidder}(1).
\end{proof}

\begin{remark}
The explicit form of $\Lambda_{\ba, \bc}$ obtained above provides a
connection to the topic of \emph{cardinal $L$-splines}; see
\cite{Micchelli}, for example, for more on these.
In~\cite[Section~5]{ST}, the authors study the restriction of a
certain $L$-spline to an interval $[ 0, \eta ]$. With a slight change
of notation to match that used here, this is
\[
\widetilde{A}_m( -x; t ) = %
1 + \sum_{j = 1}^m \frac{e^{-a_j x} ( 1 - t )}{t - e^{a_j \eta}} %
\prod_{k \neq j} \frac{a_k}{a_k - a_j}.
\]
It follows immediately from Proposition~\ref{P2} that the
Hirschman--Widder density $\Lambda_{\ba, \bc}$ is the asymptote of
this spline:
\[
\Lambda_{\ba, \bc}( x ) = %
\lim_{t \to \pm \infty} \frac{\rd}{\rd x} \widetilde{A}_m( -x; t ).
\]
Curry and Schoenberg \cite{CuS} conducted a study of P\'olya frequency
functions as limits of splines. Their approach was recently complemented
by Okounkov from the perspective of group representation theory: if
$\Omega$ is the orbit of $\diag( a_1, \ldots, a_m )$ under conjugation by
the unitary group $U(m)$ and $P$ is the projection from $\Omega$ to $\R$
given by taking the upper-left entry of each matrix then the the
fundamental spline $M_{m-1}( x; a_1, a_2, \ldots, a_m )$ can be viewed as
the density of the projection by $P$ of the normalized $U(m)$-invariant
measure on $\Omega$. See Section~$8$ in \cite{OV} for details, or
\cite{Faraut}.
\end{remark}

We now give a determinantal representation for generic Hirschman--Widder
densities, which follows directly from (\ref{Evdm}).

\begin{proposition}\label{Pclosedform}
Let $\balpha \in ( 0, \infty )^m$ have distinct coordinates and let $a_j
:= \alpha_j^{-1}$ for $j = 1$, \ldots, $m$, where $m \geq 2$.
The Hirschman--Widder density
\[
\Lambda_\balpha( x ) = \frac{a_1 \cdots a_m}{V(\ba)} \det \begin{pmatrix}
e^{-a_1 x} & e^{-a_2 x} & e^{-a_3 x} & \cdots & e^{-a_m x} \\
1 & 1 & 1 & \cdots & 1 \\
a_1 & a_2 & a_3 & \cdots & a_m\\
\vdots & \vdots & \vdots & \ddots & \vdots\\
a_1^{m-2} & a_2^{m-2} & a_3^{m-2} & \cdots & a_m^{m-2}
\end{pmatrix} \qquad (x \geq 0).
\]
\end{proposition}

\subsection{Non-smoothness at the origin}

We now take a closer look at Schoenberg's result
\cite[Corollary~2]{Schoenberg51} that Hirschman--Widder densities are
the only non-smooth, continuous one-sided P\'olya frequency functions,
up to homothety. In the process, we obtain what is, to the best of our
knowledge, a novel connection between P\'olya frequency functions and
symmetric functions.

The theme in this subsection is the smoothness of the map
$\balpha \mapsto \Lambda_\balpha( x )$, where $x$ is fixed; for
simplicity, we consider only $\balpha$ with positive entries. The
simplest example is such that
\[
\Lambda_{( \alpha_1, \alpha_2 )}( x ) = \begin{cases}
 ( \alpha_1 - \alpha_2 )^{-1} %
( e^{-\alpha_1^{-1} x} - e^{-\alpha_2^{-1} x} ) \qquad & %
\text{if } \alpha_1 \neq \alpha_2 \text{ and } x > 0, \\
 \alpha_1^{-2} x e^{-\alpha_1^{-1} x} & %
\text{if } \alpha_1 = \alpha_2 \text{ and } x > 0, \\
 0 & \text{otherwise}.
\end{cases}
\]
It is readily verified that the map
$( \alpha_1, \alpha_2 ) \mapsto \Lambda_{( \alpha_1,\alpha_2 )}( x )$
is continuous on $( 0, \infty )^2$ for any $x \in \R$. Indeed, more is
true.
\begin{enumerate}
\item For any non-negative integer $l$ and any $x \in \Rtimes$, the
map
\[
( 0, \infty )^2 \to \R; \ ( \alpha_1, \alpha_2 ) \mapsto %
\Lambda_{( \alpha_1, \alpha_2 )}^{(l)}( x )
\]
is real analytic.
\item The left and right limits at $0$ of the first derivative of
$\Lambda_{( \alpha_1, \alpha_2)}$ are distinct:
\begin{align*}
\Lambda_{( \alpha_1, \alpha_2 )}'( 0^- ) & :=
\lim_{x \to 0^-} \Lambda_{( \alpha_1, \alpha_2 )}'( x ) = 0 \\
\text{and} \quad %
\Lambda_{( \alpha_1, \alpha_2 )}'( 0^+ ) & :=
\lim_{x \to 0^+} \Lambda_{( \alpha_1, \alpha_2 )}'( x ) = %
\alpha_1^{-1} \alpha_2^{-1}.
\end{align*}
In particular, the Hirschman--Widder density
$\Lambda_{( \alpha_1, \alpha_2 )}$ is not differentiable at the origin.
\end{enumerate}

The following result is a generalization of the claim (1).

\begin{proposition}\label{Pderivative}
For any non-negative integer~$l$ and any $x \in \Rtimes$, the map
$\balpha \mapsto \Lambda_\balpha^{(l)}( x )$ is real analytic on
$( 0, \infty )^m$, for any $m \geq 2$.
\end{proposition}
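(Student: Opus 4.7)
The plan is to split on the sign of $x$ and represent $\Lambda_\balpha^{(l)}(x)$ for $x>0$ via a bounded closed contour integral whose integrand depends holomorphically on $\balpha$. When $x<0$, Proposition~\ref{P1}(3) gives $\Lambda_\balpha\equiv 0$ on $(-\infty,0)$ (since the entries of $\balpha$ are all positive), so $\Lambda_\balpha^{(l)}(x)=0$ identically in $\balpha$ and the map is trivially real analytic. So I will focus on $x>0$, where $\Lambda_\balpha$ is smooth by the explicit representation in Proposition~\ref{Phirschmanwidder}.

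First I would start from the Fourier--Mellin representation~\eqref{Emellin} and close the vertical contour by a large semicircle in the left half-plane. Because $|e^{xs}|\leq 1$ on that semicircle (as $x>0$) and the denominator grows like $|s|^m$ with $m\geq 2$, the arc contribution is $O(R^{1-m})$ and vanishes as $R\to\infty$. The residue theorem then yields
\[
\Lambda_\balpha(x)=\frac{1}{2\pi\I}\oint_\Gamma \frac{e^{xs}}{\prod_{j=1}^m (1+\alpha_j s)}\intd s
\]
for any positively oriented closed contour $\Gamma$ enclosing all the poles $-\alpha_1^{-1},\ldots,-\alpha_m^{-1}$. Since $\Gamma$ is now compact, differentiating $l$ times under the integral is routine and produces
\[
\Lambda_\balpha^{(l)}(x)=\frac{1}{2\pi\I}\oint_\Gamma \frac{s^l e^{xs}}{\prod_{j=1}^m (1+\alpha_j s)}\intd s.
\]

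To extract real analyticity I would fix $\balpha^0\in(0,\infty)^m$, choose $R>0$ with $(\alpha_j^0)^{-1}<R$ for every $j$, and take $\Gamma$ to be the circle $\{|s|=R\}$. By continuity, for $\balpha$ in a sufficiently small polydisc $V\subset\C^m$ centred at $\balpha^0$ the factors $1+\alpha_j s$ have no zero on $\Gamma$, so the integrand is holomorphic in $\balpha\in V$ for every $s\in\Gamma$ and jointly continuous in $(\balpha,s)$. Standard theorems (Morera together with Fubini, or differentiation under a bounded integral) then make the displayed integral a holomorphic function of $\balpha\in V$ that extends the map $\balpha\mapsto\Lambda_\balpha^{(l)}(x)$ from $V\cap(0,\infty)^m$. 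As $\balpha^0$ was arbitrary, real analyticity on all of $(0,\infty)^m$ follows.

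The only delicate point is the decay estimate that lets one close the Fourier--Mellin contour on the left; this is precisely where the hypothesis $m\geq 2$ is used. Once that is secured, holomorphy in $\balpha$ passes transparently through the bounded contour integral, because the integrand is a rational function of $\balpha$ whose poles have been manoeuvred to lie strictly inside $\Gamma$.
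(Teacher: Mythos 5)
Your proof is correct, and it follows a genuinely different route from the paper's. Both arguments begin with the Fourier--Mellin representation~\eqref{Emellin}, but they diverge at once: the paper stays on the unbounded line, integrates by parts $n$ times (with $n\geq l-m+2$) to turn the integrand into $e^{\I tx}G_n(t)$ with $G_n(t)=O(|t|^{-n-m})$, and only then justifies $l$-fold differentiation under the integral; the holomorphy in $\balpha$ is extracted from the Lebesgue integral on $\R$ by extending the parameters into $\ROHP$, and an inductive bookkeeping step is needed because $\partial_x^l\bigl((-\I x)^n\Lambda_\balpha(x)\bigr)$ is a Leibniz combination of derivatives rather than $\Lambda_\balpha^{(l)}(x)$ itself. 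You instead use the sign of $x$: for $x<0$ the map is identically zero and there is nothing to prove, while for $x>0$ the decay $|e^{xs}|\leq 1$ on $\Re s\leq 0$ together with the $O(R^{-m})$ size of the denominator lets you close the contour in the left half-plane, reducing everything to a contour integral over a fixed circle $|s|=R$. Once the contour is compact, $l$-fold differentiation in $x$ and holomorphy in $\balpha$ (on a small polydisc keeping the poles $-\alpha_j^{-1}$ strictly inside $\Gamma$ and the denominator nonvanishing on $\Gamma$) are both routine, with no induction or integration-by-parts estimates needed. The trade-off is that your argument requires an explicit case split, whereas the paper's works uniformly in $x\in\Rtimes$; on the other hand, your argument removes the integrability bookkeeping entirely. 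Both proofs use $m\geq 2$ in essentially the same place, to make the dominating estimate integrable (paper) or the arc contribution vanish (yours).
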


\begin{proof}[Proof 1]
Let $\balpha = ( \alpha_1, \ldots, \alpha_m ) \in ( 0, \infty )^m$,
where $m \geq 2$, and recall the identity (\ref{Emellin}):
\[
\Lambda_\balpha( x ) = %
\frac{1}{2 \pi} \int_{-\infty}^\infty e^{\I t x} %
\prod_{j = 1}^m ( 1 + \I \alpha_j t )^{-1}  \intd t %
\qquad ( x \in \R ).
\]
There is no obstruction to extending this integral to the case where
$\alpha_1$, \ldots, $\alpha_m$ lie in the open right half-plane
$\ROHP := \{ z \in \C : \Re z > 0 \}$. Indeed, for such $\alpha_1$,
\ldots, $\alpha_m$, we have that
\[
| e^{\I t x} \prod_{j = 1}^m ( 1 + \I \alpha_j t )^{-1} | \leq 
| t |^{-m} \prod_{j = 1}^m | \Re \alpha_j |^{-1},
\]
which is Lebesgue integrable with respect to $t$. The analyticity of
the integrand in the complex variables $\alpha_1$, \ldots, $\alpha_m$
is then inherited by the integral.
 
We fix $x \in \Rtimes$. Integration by parts yields the identity
\[
( -\I x )^n \Lambda_\balpha( x ) = \frac{1}{2 \pi} %
\int_{-\infty}^\infty e^{\I t x} G_n( t ) \intd t
\]
for any non-negative integer $n$, where
\[
G_n( t ) := \frac{\partial^n}{\partial t^n} %
\Bigl( \prod_{j = 1}^m ( 1 + \I \alpha_j t )^{-1} \Bigr) = %
O( | t |^{-n - m} ) \text{ as } | t | \to \infty,
\]
since $G_n( t )$ is a homogeneous polynomial in
$( 1 + \I \alpha_1 t )^{-1}$, \ldots, $( 1 + \I \alpha_m t )^{-1}$ of
degree $n + m$.

Now suppose $n \geq l - m + 2$, and note that the derivative
\[
F_{l, n, \balpha}( x ) := \frac{\partial^l}{\partial x^l}%
\Bigl( ( -\I x )^n \Lambda_\balpha( x ) \Bigr) = %
\frac{1}{2 \pi} %
\int_{-\infty}^\infty ( \I t )^l e^{\I t x} G_n( t ) \intd t.
\]
Differentiation under the integral sign is valid here as long as the
function $t \mapsto | t |^l | G_n( t ) |$ is integrable, and this
holds because $l - n - m \leq -2$. The integrand is an analytic
function in the variables $\alpha_1$, \ldots, $\alpha_m$ as long as
these all lie in $\ROHP$, and an inductive argument now gives the
result, as $\Lambda_\balpha^{(l)}( x )$ can be expressed as a linear
combination of~$F_{l, n, \balpha}( x )$ and derivatives of lower
order.
\end{proof}

\begin{proof}[Proof 2]
If one uses the orbital-integral machinery and the
Harish-Chandra--Itzykson--Zuber integral, as explained in
Section~\ref{Sorbital}, then the representation~\eqref{Eorbit}
immediately gives the stronger assertion that, for fixed $l \geq 0$ and
$x \in \R^\times$, the map
$\balpha \mapsto \Lambda^{(l)}_\balpha( x )$ is complex analytic
in the half-space
$\{ \balpha \in \C^m : \Re \alpha_j > 0 \text{ for } j = 1, \ldots, m
\}$.
\end{proof}

We now examine the right-hand derivatives at the origin of our
Hirschman--Widder densities; all of the left-hand derivatives here are
clearly zero. Our approach will involve another instance of Schur
polynomials.

We have already seen that the elementary symmetric polynomials of
Remark~\ref{Resp} are Schur polynomials. Another well studied family
of symmetric functions is that of complete homogeneous symmetric
polynomials: if $l$ is a non-negative integer and
$\ba = ( a_1, \ldots, a_m ) \in \F^m$ for some $m \geq 1$ then
\begin{equation}
h_0 \equiv 1 \quad \text{and} \quad %
h_l( \ba ) := %
\sum_{1 \leq j_1 \leq j_2 \leq \cdots \leq j_l \leq m} %
a_{j_1} a_{j_2} \cdots a_{j_l}.
\end{equation}
These are also Schur polynomials: by \cite[(3.9)]{Macdonald}, we have
that
\begin{equation}\label{Eschurhom}
h_l = s_\blambda, \qquad \text{where } \qquad %
\blambda = ( 0, 1, \ldots, m - 2, m - 1 + l ).
\end{equation}

Having defined these polynomials, we may now state and prove the
following result.

\begin{proposition}\label{PhwMaclaurin}
Suppose that $\ba = ( a_1, \ldots, a_m ) \in ( 0, \infty )^m$, where
$m \geq 2$, and let
$\balpha := ( \alpha_1 = a_1^{-1}, \ldots, \alpha_m = a_m^{-1} )$. The
Hirschman--Widder density $\Lambda_\balpha$ has the Maclaurin-series
expansion
\begin{equation}\label{Emaclaurinhw}
\Lambda_\balpha( x ) = a_1 \cdots a_m %
\sum_{n = m - 1}^\infty %
\frac{( -1 )^{n - m + 1} h_{n - m + 1}( a_1, \ldots, a_m )}{n!} x^n
\end{equation}
valid for all $x \in [ 0, \infty )$. Consequently, the function
$\Lambda_\balpha$ is $m - 2$ times continuously differentiable, but
$\Lambda_\balpha^{( m - 1 ) }( 0 )$ does not exist.
\end{proposition}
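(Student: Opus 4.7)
The plan is to prove the Maclaurin-series formula (\ref{Emaclaurinhw}) first under the assumption that the entries of $\ba$ are distinct, then extend to arbitrary $\ba \in (0,\infty)^m$ by analytic continuation, and finally read off the smoothness claims directly from the form of the series.

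In the distinct case, Proposition~\ref{P2} expresses $\Lambda_\balpha(x) = \sum_{j=1}^m c_j e^{-a_j x}$ for $x \geq 0$, with $c_j = (-1)^{j-1}(a_1 \cdots a_m) V(\widehat{\ba}_j) / V(\ba)$. I would expand each $e^{-a_j x}$ as a Taylor series and interchange the order of summation (justified by absolute convergence, since $\sum_j |c_j| e^{a_j |x|} < \infty$) to obtain
\[
\Lambda_\balpha(x) = \frac{a_1 \cdots a_m}{V(\ba)} \sum_{n = 0}^\infty \frac{(-x)^n}{n!} \sum_{j = 1}^m (-1)^{j - 1} a_j^n V(\widehat{\ba}_j).
\]
The key step is to recognize the inner sum as a Schur polynomial. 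Cofactor expansion of $\det( a_j^{\lambda_k} )_{j, k = 1}^m$ along its last column, with $\blambda = ( 0, 1, \ldots, m - 2, n )$, yields
\[
\sum_{j = 1}^m (-1)^{j - 1} a_j^n V(\widehat{\ba}_j) = (-1)^{m - 1} V( \ba ) \, s_{( 0, 1, \ldots, m - 2, n )}( \ba ).
\]
For $0 \leq n \leq m - 2$, two columns of the Schur determinant coincide and this vanishes; for $n \geq m - 1$, identity (\ref{Eschurhom}) identifies the Schur polynomial with $h_{n - m + 1}( \ba )$. Combining the signs via $(-1)^n (-1)^{m - 1} = (-1)^{n - m + 1}$ produces exactly (\ref{Emaclaurinhw}) in the distinct case.

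To pass to arbitrary $\ba \in (0, \infty)^m$, both sides of (\ref{Emaclaurinhw}) are real-analytic in $\ba$ for each fixed $x \geq 0$: the left-hand side by Proposition~\ref{Pderivative}, and the right-hand side because each $h_l( \ba )$ is polynomial in $\ba$ and the Maclaurin series converges uniformly on compact subsets of $\ba$ and $x$ (a crude bound such as $| h_l( \ba ) | \leq \binom{m + l - 1}{l} \max_j a_j^l$ suffices). Since both sides agree on the dense open subset where $\ba$ has distinct entries, they agree everywhere on $(0, \infty)^m$. The smoothness assertion then follows immediately from the form of (\ref{Emaclaurinhw}): it is a convergent power series in $x$ vanishing to exact order $m - 1$, so $\Lambda_\balpha \in C^\infty([0, \infty))$ with all one-sided derivatives at $0^+$ of order $< m - 1$ equal to zero, while $\Lambda_\balpha^{(m - 1)}( 0^+ ) = a_1 \cdots a_m > 0$. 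Since $\Lambda_\balpha$ vanishes identically on $(-\infty, 0)$, matching the first $m - 2$ one-sided derivatives gives $\Lambda_\balpha \in C^{m - 2}( \R )$, but the mismatch at order $m - 1$ precludes the existence of $\Lambda_\balpha^{(m - 1)}( 0 )$.

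The main obstacle is bookkeeping rather than depth: getting the sign in the Schur-determinant cofactor expansion exactly right, and verifying uniform convergence carefully enough to justify the extension from distinct to general $\ba$. Neither presents a genuine difficulty, but both need to be carried out with care so that the announced formula emerges with the correct leading sign.
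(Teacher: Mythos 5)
Your proposal is correct and follows essentially the same path as the paper's proof: start from the explicit partial-fraction form $\Lambda_\balpha(x)=\sum_j c_j e^{-a_j x}$ from Proposition~\ref{P2} in the distinct-entry case, recognize the resulting coefficient as a Schur polynomial via cofactor expansion of the Schur determinant with $\blambda=(0,1,\ldots,m-2,n)$ (the paper expands along the row of $a_j^n$'s rather than the column, but this is the same computation), identify it with $h_{n-m+1}$ using \eqref{Eschurhom}, and then extend to general $\ba\in(0,\infty)^m$ by continuity in $\balpha$ using Proposition~\ref{Pderivative}. The only cosmetic difference is that you derive the Maclaurin series by expanding the exponentials and interchanging sums, whereas the paper computes $\Lambda_\balpha^{(n)}(0^+)$ directly and then invokes the fact that $\Lambda_\balpha|_{[0,\infty)}$ is the restriction of an entire function; these are two phrasings of the same fact.
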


\begin{proof}
We assume first that $a_1$, \ldots, $a_m$ are distinct and, without
loss of generality, that $a_1 < \cdots < a_m$. If $n$ is a
non-negative integer then Proposition~\ref{Pclosedform} gives that
\[
\Lambda^{( n )}_{\balpha}( 0^+ ) =
( -1 )^n \frac{a_1 \cdots a_m}{V( \ba )}
\det\begin{pmatrix}
 a_1^n & a_2^n & \cdots & a_m^n \\
 1 & 1 & \cdots & 1 \\
 a_1 & a_2 & \cdots & a_m \\
 a_1^2 & a_2^2 & \cdots & a_m^2 \\
 \vdots & \vdots & \ddots & \vdots \\
 a_1^{m - 2} & a_2^{m - 2} & \cdots & a_m^{m - 2}
\end{pmatrix}.
\]
If $n = 0$, $1$, \ldots, $m - 2$, then the matrix above has two
identical rows, so its determinant vanishes. For $n \geq m - 1$,
moving the top row to the bottom takes $m - 1$ transpositions, and now
the result follows from Definition~\ref{Dschur} and~(\ref{Eschurhom}),
together with the fact that $\Lambda_\balpha$ is the restriction to
$[ 0, \infty )$ of an entire function.

Since $\balpha \mapsto \Lambda_\balpha( x )$ is real analytic on
$( 0, \infty )^m$ for any $x > 0$, by Proposition~\ref{Pderivative},
it is continuous there. As the right-hand side of (\ref{Emaclaurinhw})
is also a continuous function of such~$\balpha$ for fixed $x > 0$, the
identity holds for all $( x, \balpha ) \in ( 0, \infty )^{m + 1}$. It
also holds trivially when $x = 0$.

An obstruction to $\Lambda_\balpha$ being continuously differentiable
can only appear at the origin, where $\Lambda_\balpha^{(n)}( 0^- )$ is
always zero. The working above shows that
$\Lambda_\balpha^{(n)}( 0^+ )$ is also zero if $n = 0$, \ldots,
$m - 2$, whereas
$\Lambda_\balpha^{( m - 1 )}( 0^+ ) = h_0( a_1, \ldots, a_m ) = 1$.
\end{proof}

As a brief digression, we use Proposition~\ref{PhwMaclaurin} to obtain a
classical identity in algebraic combinatorics,
Corollary~\ref{Cgeneratingfn}.
Although the identity is well known (see \cite[Chapter~I.2]{Macdonald} or
\cite{CS-paper}, for example), its connection to P\'olya frequency
functions is not.

We begin with the following elementary observation.

\begin{lemma}\label{LConvergenceRadius}
Let $\ba = ( a_1, \ldots, a_m ) \in [ 0, \infty )^m$, where
$m \geq 2$. Then
\[
\lim_{l \to \infty} h_l( \ba )^{1 / l} = %
\max\{ a_1, \ldots, a_m \}.
\]
\end{lemma}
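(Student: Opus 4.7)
The plan is to sandwich $h_l(\ba)$ between two quantities whose $l$th roots both converge to $M := \max\{a_1, \ldots, a_m\}$, so that the squeeze theorem yields the claimed limit.

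For the lower bound, I would pick an index $j_0$ with $a_{j_0} = M$ and isolate the single term $a_{j_0}^l = M^l$ from the sum defining $h_l(\ba)$; since every summand is non-negative, this yields $h_l(\ba) \geq M^l$, and hence $h_l(\ba)^{1/l} \geq M$ for every $l \geq 1$. For the upper bound, I would note that each of the summands in the definition of $h_l$ is a product of $l$ entries of $\ba$ and is therefore at most $M^l$, while the number of weakly increasing sequences $1 \leq j_1 \leq \cdots \leq j_l \leq m$ equals $\binom{m + l - 1}{l}$. This gives
\[
h_l( \ba ) \leq \binom{m + l - 1}{l} M^l.
\]

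Taking $l$th roots, it remains to observe that $\binom{m + l - 1}{l}^{1 / l} \to 1$ as $l \to \infty$, since for fixed $m$ the binomial coefficient is a polynomial in $l$ of degree $m - 1$, and any polynomial in $l$ has $l$th root tending to $1$. Combining the two bounds gives
\[
M \leq \liminf_{l \to \infty} h_l( \ba )^{1 / l} \leq
\limsup_{l \to \infty} h_l( \ba )^{1 / l} \leq M,
\]
which is the desired conclusion. No step seems to present a serious obstacle; the only mild point is handling the degenerate case $M = 0$ (all $a_j = 0$), where $h_l(\ba) = 0$ for $l \geq 1$ and the convention $0^{1/l} = 0$ makes the statement hold trivially. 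One could equivalently derive the result from the Cauchy--Hadamard formula applied to the generating function $\sum_{l \geq 0} h_l( \ba ) t^l = \prod_{j = 1}^m ( 1 - a_j t )^{-1}$, whose radius of convergence is $M^{-1}$, but the direct sandwich argument above is more self-contained.
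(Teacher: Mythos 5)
Your proof is correct and takes essentially the same approach as the paper: both establish the sandwich $M^l \leq h_l(\ba) \leq \binom{l+m-1}{l} M^l$ and conclude by observing that the binomial coefficient grows polynomially in $l$ (the paper makes this explicit via the bound $\binom{l+m-1}{l} \leq (2l)^{m-1}/(m-1)!$ for $l \geq m-1$). Your additional remark on the degenerate case $M = 0$ is a sensible bit of care, though not one the paper dwells on.
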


\begin{proof}
If $a_k \geq a_j$ for $j = 1$, \ldots, $m$ and $l \geq m - 1$ then
\[
a_k^l \leq h_l( a_1, \ldots, a_m ) \leq %
\binom{l + m - 1}{l} a_k^l \leq %
\frac{( 2 l )^{m - 1}  a_k^l}{( m - 1 )!}.
\]
The result follows.
\end{proof}

\begin{corollary}\label{Cgeneratingfn}
Let $\ba = ( a_1, \ldots, a_m ) \in ( 0, \infty )^m$, where
$m \geq 2$. The generating function of the family of complete
homogeneous symmetric polynomials in $a_1$, \ldots, $a_m$ is
\[
\sum_{l = 0}^\infty h_l( a_1, \ldots, a_m ) z^l = %
\prod_{j = 1}^m \frac{1}{1 - a_j z} \qquad \text{whenever } %
| z | < \min\{ a_j^{-1} : j = 1, \ldots, m \}.
\]
\end{corollary}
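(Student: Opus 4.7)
The plan is to derive the generating function identity by computing the Laplace transform of $\Lambda_\balpha$ in two different ways and comparing the results. On the one hand, the Laplace transform is prescribed by~(\ref{ELaplaceHW}) as $\prod_{j=1}^m(1+\alpha_j s)^{-1} = \prod_{j=1}^m a_j/(a_j+s)$. On the other hand, I will integrate the Maclaurin series from Proposition~\ref{PhwMaclaurin} term by term, using the elementary identity $\int_0^\infty e^{-sx}x^n\intd x = n!/s^{n+1}$, to obtain
\[
\cB\{\Lambda_\balpha\}(s) \;=\; a_1\cdots a_m \sum_{n=m-1}^\infty \frac{(-1)^{n-m+1}h_{n-m+1}(\ba)}{s^{n+1}} \;=\; \frac{a_1\cdots a_m}{s^m}\sum_{l=0}^\infty (-1)^l h_l(\ba) s^{-l},
\]
after reindexing $l=n-m+1$.

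Equating the two expressions and cancelling the factor $a_1\cdots a_m/s^m$ gives
\[
\prod_{j=1}^m \frac{s}{a_j+s} \;=\; \sum_{l=0}^\infty (-1)^l h_l(\ba) s^{-l}.
\]
A change of variable $z=-1/s$ then transforms the left-hand side into $\prod_{j=1}^m(1-a_jz)^{-1}$ and the right-hand side into $\sum_{l=0}^\infty h_l(\ba)z^l$, which is the claimed identity. Initially this is obtained only for real $s$ larger than $\max\{a_j\}$, that is, for real $z$ in $(-1/\max a_j,0)$, but since both sides are holomorphic on the disk $|z|<1/\max\{a_j\}=\min\{a_j^{-1}\}$, the identity extends to the whole disk by the identity principle.

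The principal obstacle is justifying the term-by-term integration. Here I would invoke Lemma~\ref{LConvergenceRadius}, which yields $h_l(\ba) \leq C\,l^{m-1}(\max a_j)^l$ for a suitable constant $C$. For real $s>\max\{a_j\}$, this bound makes the double series
\[
\sum_{n=m-1}^\infty \frac{|h_{n-m+1}(\ba)|}{n!}\int_0^\infty x^n e^{-sx}\intd x \;=\; \sum_{n=m-1}^\infty \frac{|h_{n-m+1}(\ba)|}{s^{n+1}}
\]
convergent, so Fubini--Tonelli permits the exchange of summation and integration, completing the argument.
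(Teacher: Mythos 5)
Your proof is correct and follows essentially the same route as the paper: compute $\cB\{\Lambda_\balpha\}$ by term-by-term Laplace transformation of the Maclaurin series from Proposition~\ref{PhwMaclaurin}, justify the interchange via the bound on $h_l$ from Lemma~\ref{LConvergenceRadius}, substitute $z=-1/s$, and extend to the full disk by the identity theorem. The only cosmetic difference is that you spell out the Fubini--Tonelli justification a bit more explicitly than the paper, which simply notes that the relevant series is absolutely convergent.
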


As we explain below, this corollary provides a way to obtain the
moments of $\Lambda_\balpha$ from its Maclaurin coefficients. The
reverse inference can be drawn if one knows the moments, via arguments
of P\'olya \cite{Polya1915} and Schoenberg \cite{Schoenberg51}. This
is discussed further in Remark~\ref{Rmoments} below.

\begin{proof}
This power series has radius of convergence
$\min\{ a_j^{-1} : j = 1, \ldots, m \}$, by
Lemma~\ref{LConvergenceRadius}. Now suppose
$0 > z > \max\{ -a_j^{-1} : j = 1, \ldots, m \}$ and let
$s := -z^{-1}$. Setting $\alpha_j := a_j^{-1}$ for $j = 1$, \ldots,
$m$, we see that
\begin{align*}
\prod_{j = 1}^m \frac{1}{1 - a_j z} & = %
\alpha_1 \cdots \alpha_m s^m %
\cB\{ \Lambda_\balpha \}( s ) \\
 & = \alpha_1 \cdots \alpha_m  s^m %
\int_0^\infty e^{-s x} \Lambda_\balpha( x ) \intd x \\
& = s^m \int_0^\infty \sum_{n = m - 1}^\infty %
\frac{( -1 )^{n - m + 1} h_{n - m + 1}( a_1, \ldots, a_m )}{n!} %
x^n e^{-s x} \intd x,
\end{align*}
by Proposition~\ref{PhwMaclaurin}. As the series
\[
s^m \sum_{n = m - 1}^\infty %
\frac{h_{n - m + 1}( a_1, \ldots, a_m )}{n!} %
\int_0^\infty x^n e^{-s x} \intd x = %
\sum_{l = 0}^\infty h_l( a_1, \ldots, a_m ) s^{-l}
\]
is absolutely convergent, we may exchange the order of integration and
summation in the previous formula to see that the product
$\prod_{j = 1}^m ( 1 - a_j z )^{-1}$ equals
\[
\sum_{l = 0}^\infty h_l( a_1, \ldots, a_m ) z^l,
\]
as claimed. We conclude by the identity theorem.
\end{proof}

\subsection{Hirschman--Widder densities and
probability theory}\label{Sprobability}

We now explore some connections to probability theory. A natural first
question is to identify the random variables distributed with
Hirschman--Widder density functions.

\begin{proposition}\label{Pexpcomb}
Let $\alpha \in ( 0, \infty )^m$, where $m \geq 2$. Then the
Hirschman--Widder density~$\Lambda_\balpha$ is the probability density
function for the random variable
\[
\alpha_1 X_1 + \cdots + \alpha_m X_m,
\]
where $X_1$, \ldots, $X_m$ are independent and identically distributed
exponential random variables with mean $1$.
\end{proposition}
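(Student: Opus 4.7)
The plan is to identify the density of $\alpha_j X_j$ as the one-sided exponential $\varphi_{\alpha_j}$ from the proof of Proposition~\ref{P1}, and then invoke the standard fact that the density of a sum of independent random variables is the convolution of the individual densities.

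First I would observe that if $X$ is an $\operatorname{Exp}(1)$ random variable and $\alpha > 0$, then $\alpha X$ has density $x \mapsto \alpha^{-1} e^{-\alpha^{-1} x}$ on $[0, \infty)$ and vanishing elsewhere; this is exactly $\varphi_\alpha$ as defined in the proof of Proposition~\ref{P1}. Hence, by independence of $X_1, \ldots, X_m$ and the convolution formula for sums of independent random variables, the density of $\alpha_1 X_1 + \cdots + \alpha_m X_m$ is $\varphi_{\alpha_1} \ast \cdots \ast \varphi_{\alpha_m}$, which is precisely $\Lambda_\balpha$ by (\ref{Econvolution}).

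Alternatively, and perhaps more cleanly, one can verify the identification via bilateral Laplace transforms. The Laplace transform of $\alpha_j X_j$ evaluates to
\[
\mathbb{E}\bigl[ e^{-s \alpha_j X_j} \bigr] =
\int_0^\infty e^{-s \alpha_j x} e^{-x} \intd x = ( 1 + \alpha_j s )^{-1}
\]
for $\Re s > -\alpha_j^{-1}$, so independence of $X_1, \ldots, X_m$ yields
\[
\mathbb{E}\bigl[ e^{-s ( \alpha_1 X_1 + \cdots + \alpha_m X_m )} \bigr] =
\prod_{j = 1}^m ( 1 + \alpha_j s )^{-1},
\]
which matches $\cB\{ \Lambda_\balpha \}( s )$ on the open strip from Definition~\ref{DHWdensity}. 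Uniqueness of the continuous density with prescribed bilateral Laplace transform, already invoked in the proof of Proposition~\ref{P1} via the Fourier--Mellin inversion formula, completes the argument.

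There is no real obstacle here: the result is essentially a restatement of the convolution construction~(\ref{Econvolution}) in the probabilistic language of sums of scaled independent exponentials, and the only work is to spell out the correspondence $\alpha_j X_j \leftrightarrow \varphi_{\alpha_j}$.
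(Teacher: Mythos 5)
Your first argument is exactly the paper's proof: identify the density of $\alpha_j X_j$ as $\varphi_{\alpha_j}$ and then appeal to the convolution identity~(\ref{Econvolution}). The Laplace-transform alternative you sketch is a valid cross-check but not needed; both it and the main line are correct.
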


\begin{proof}
A exponential random variable $X$ with mean $1$ has density function
$\bJ_{x \geq 0} \, e^{-x}$, so if $\alpha > 0$ then $\alpha X$ has
density function
$\bJ_{x \geq 0} \, \alpha^{-1} e^{-\alpha^{-1} x} = %
\varphi_\alpha( x )$. The result now follows by
(\ref{Econvolution}).
\end{proof}

\begin{remark}
Hirschman--Widder density functions are studied in the probability and
statistics literature under the name of
\emph{hypoexponential densities}. They are intimately connected to
the time to absorption for a finite-state Markov chain. When the
entries of $\balpha$ are equal, then $\Lambda_\balpha$ is an
\emph{Erlang density}, named after the father of queueing theory;
this is a special case of the gamma distribution, occurring when the
shape parameter is an integer. These densities have found use in
diverse applied fields, including queueing theory, population
genetics, reliability analysis and cell biology. The connection to
Hirschman and Widder's memoir seems to be generally unnoticed in the
probability literature.
\end{remark}

The probabilistic interpretation also leads to closed-form
expressions. The explicit formula for $\Lambda_\balpha( x )$ when
$\alpha_1$, \ldots, $\alpha_m$ are positive and distinct, which
appears in Hirschman and Widder's memoir in analysis \cite{HW}, also
appears in probability textbooks; see, for example, Exercise~12 in
Chapter~I of Feller's 1966 book~\cite{Feller}. An explicit formula in
the case where repeats may occur was obtained by Amari and
Misra~\cite{AM}.

\begin{remark}
A second probabilistic interpretation of the density $\Lambda_\balpha$
can be derived from random matrix theory. Consider the diagonal matrix $D
= \diag( a_1, \ldots, a_m )$ and its orbit $\Omega$ under unitary
conjugation in the space of $m \times m$ positive semi-definite matrices.
If $\mu$ is the normalized $U(m)$-invariant measure on $\Omega$ then
$\Lambda_\balpha( x ) \intd x$ is the distribution of any diagonal entry
of a random positive semi-definite matrix of arbitrary size distributed
according to $\mu$.
See Section~3 and \cite[Section~8]{OV}, or \cite{Faraut} for details.
\end{remark}

\begin{remark}
Schoenberg's characterization of P\'olya frequency functions, as those
for which the reciprocal of the bilateral Laplace transform is the
restriction of an entire function in the Laguerre--P\'olya class,
admits the same probabilistic interpretation as in Proposition
\ref{Pexpcomb}. The Hadamard--Weierstrass factorization implies that
such a function is the density function of a possibly infinite linear
combination of independent and identically distributed exponential random
variables, together with at most one Gaussian random variable.
\end{remark}

We now obtain the promised closed form for the moments of the random
variables distributed according to Hirschman--Widder densities.

\begin{proof}[Proof of Theorem~\ref{Tsymm}]
The first part of this result has been established in the proof of
Proposition~\ref{PhwMaclaurin}. For the second, suppose first that the
entries of $\balpha = ( \alpha_1, \ldots, \alpha_m )$ are distinct,
and let $a_j := \alpha_j^{-1}$ for $j = 1$, \ldots, $m$. Suppose
further without loss of generality that $a_1 < \cdots < a_m$. If $p$
is a non-negative integer then (\ref{Evdm}) gives that
\begin{align*}
\frac{1}{p!} \int_{-\infty}^\infty x^p \Lambda_\balpha( x ) \intd x %
& = \frac{a_1 \cdots a_m}{p! \, V( \ba )} %
\sum_{j = 1}^m ( -1 )^{j - 1} V( \widehat{\ba}_j ) %
\int_0^\infty x^p e^{-a_j x} \intd x \\
 & = \frac{a_1 \cdots a_m}{V( \ba )} \sum_{j = 1}^m ( -1 )^{j - 1} %
V( \widehat{\ba}_j ) a_j^{-p - 1} \\
 & = \frac{a_1 \cdots a_m}{V( \ba )} %
\det\begin{pmatrix}
 a_1^{-p - 1} & a_2^{-p - 1} & \cdots & a_m^{-p - 1} \\
 1 & 1 & \cdots & 1 \\
 a_1 & a_2 & \cdots & a_m \\
 \vdots & \vdots & \ddots & \vdots \\
 a_1^{m - 2} & a_2^{m - 2} & \cdots & a_m^{m - 2}
\end{pmatrix}.
\end{align*}
This quantity is unchanged if, for $j = 1$, \ldots, $m$, we multiply
the $j$th column by $\alpha_j^{m - 2}$, and multiply the whole
expression by $( a_1 \cdots a_m )^{m - 2}$, to obtain
\begin{multline*}
\frac{( a_1 \cdots a_m )^{m - 1}}{V( \ba )} \det\begin{pmatrix}%
\alpha_1^{m - 1 + p} & \alpha_2^{m - 1 + p} & \cdots & %
\alpha_m^{m - 1 + p} \\
\alpha_1^{m - 2} & \alpha_2^{m - 2} & \cdots & \alpha_m^{m - 2} \\
\alpha_1^{m - 3} & \alpha_2^{m - 3} & \cdots &
\alpha_m^{m - 3} \\
\vdots & \vdots & \ddots & \vdots \\
1 & 1 & \cdots & 1
\end{pmatrix} \\
= \frac{( a_1 \cdots a_m )^{m - 1}}{V( \ba )} %
(-1)^{m ( m - 1 ) / 2} V(\balpha) %
s_{( 0, 1, \ldots, m - 2, m - 1 + p )}( \alpha_1, \ldots, \alpha_m ).
\end{multline*}
As
\[
V( \ba ) = %
( a_1 \cdots a_m )^{m - 1} ( -1 )^{m ( m - 1 ) / 2} V( \balpha ),
\]
this gives the result when $\balpha$ has distinct entries. The general
case now follows by a continuity argument.

Finally, we explain how to recover the parameter $\balpha$ from
moments or Maclaurin coefficients. The $p$th moment of
$\Lambda_\balpha$ is
\begin{equation}\label{Emoments}
\mu_p := \int_\R x^p \Lambda_\balpha( x ) \intd x = %
p! \, h_p( \balpha ) \qquad ( p = 1, 2, \ldots ).
\end{equation}
The Jacobi--Trudi identity~\cite[(3.4)]{Macdonald} asserts, for any
increasing tuple $( \lambda_1, \ldots, \lambda_m )$, that
\[
s_{( \lambda_1, \ldots, \lambda_m )}( \balpha ) = %
\det\bigl( h_{\lambda_{m - j + 1} - m + k}( \balpha ) %
\bigr)_{j, k = 1}^m = \det\bigl( h_{\lambda_j - k + 1}( \balpha ) %
\bigr)_{j, k = 1}^m
\]
with the conventions of Definition~\ref{Dschur} and $h_l := 0$
whenever $l < 0$. In particular,
\[
e_l( \balpha ) = %
s_{( 0, 1, \ldots, m - l - 1, m - l + 1, \ldots, m)}( \balpha ) = %
\det\begin{pmatrix} A_l & 0 \\ C_l & D_l \end{pmatrix} = \det D_l %
\qquad (l = 1, \ldots, m ),
\]
where $A_l$ is a lower-triangular matrix with $h_0( \balpha ) = 1$ as
each entry of the leading diagonal and $D_l$ is the $l \times l$
Toeplitz matrix
\[
\begin{pmatrix}
 h_1( \balpha ) & 1 & 0 & \cdots & 0 & 0 \\
 h_2( \balpha ) & h_1( \balpha ) & 1 & \cdots & 0 & 0 \\
 h_3( \balpha ) & h_2( \balpha ) & h_1( \balpha ) & \cdots & 0 & 0 \\
 \vdots & \vdots & \vdots & \ddots & \vdots & \vdots \\
 h_{l - 1}( \balpha ) & h_{l - 2}( \balpha ) & h_{l - 3}( \balpha ) & %
\cdots & h_1( \balpha ) & 1 \\
 h_l( \balpha ) & h_{l - 1}( \balpha ) & h_{l - 2}( \balpha ) & %
\cdots & h_2( \balpha ) & h_1( \balpha )
\end{pmatrix}.
\]
It follows that
$e_l( \balpha ) = \det D_l = f_l( \mu_1, \ldots, \mu_l )$ for
some polynomial function $f_l$. Hence
\begin{equation}\label{Epolynom}
F( t ) := 1 + \sum_{l = 1}^m f_l( \mu_1, \ldots, \mu_l ) t^l = %
\sum_{l = 0}^m e_l( \balpha ) t^l = %
\prod_{j = 1}^m ( 1 + \alpha_j t )
\end{equation}
is determined by the moments $\mu_1$, \ldots, $\mu_m$ and the roots of
$F$ yield precisely the entries of $\balpha$.

Similarly, Proposition~\ref{PhwMaclaurin} gives that
\[
\Lambda_\balpha^{(n)}( 0^+ ) / \Lambda_\balpha^{(m - 1)}( 0^+ ) = %
( -1 )^{n - m + 1} %
h_{n - m + 1}( \alpha_1^{-1}, \ldots, \alpha_m^{-1} ) %
\qquad ( n = m, \ldots, 2 m - 1 )
\]
and the previous working shows that we may recover, up to permutation
of its entries, the tuple $( \alpha_1^{-1}, \ldots, \alpha_m^{-1} )$
from these ratios of Maclaurin coefficients.
\end{proof}

\begin{remark}
The computation of the moments of $\Lambda_\balpha$ in the previous
proof was obtained with the assistance of the theory of symmetric
functions. A more direct approach, using the probabilistic
interpretation of $\Lambda_\balpha$, is also available. If $X_1$,
\ldots, $X_m$ are independent exponential random variables, each with
mean one, and $\alpha_1$, \ldots, $\alpha_m$ are positive constants,
then the random variable $X := \sum_{j = 1}^m \alpha_j X_j$ has
density $\Lambda_\balpha$, by Proposition~\ref{Pexpcomb}, and
moment-generating function
\[
\sum_{p = 0}^\infty \frac{\mu_p}{p!} z^p = \E[ e^{z X} ] = %
\cB\{ \Lambda_\balpha \}( -z ) = %
\prod_{j = 1}^m \frac{1}{1 - \alpha_j z}.
\]
Corollary~\ref{Cgeneratingfn}, with $\ba$ replaced by $\balpha$, now
shows that the moments are as claimed. Alternatively, we may proceed via
an explicit computation:
\[
\E\bigl[ \Bigl( \sum_{j = 1}^m \alpha_j X_j \Bigr)^p \bigr] = %
p! \, h_p( \alpha_1, \ldots, \alpha_m )
\]
for any non-negative integer $p$, since
\[
\Bigl( \sum_{j = 1}^m \alpha_j X_j \Bigr)^p = %
\sum_{p_1 + \cdots + p_m = p} \binom{p}{p_1 \cdots p_m} %
\alpha_1^{p_1} \cdots \alpha_m^{p_m} X_1^{p_1} \cdots X_m^{p_m},
\]
where the sum is taken over non-negative integers, and
$\E[ X_1^{p_1} ] = p_1!$.
\end{remark}

We now provide a connection between Hirschman--Widder densities and
certain P\'olya frequency sequences. Karlin, Proschan, and Barlow
proved that a probability density is a P\'olya frequency function if
and only if the sequence of normalized moments is a P\'olya frequency
sequence, in that the corresponding Toeplitz matrix is totally
non-negative; see \cite[Theorem~3]{KPB}. This Toeplitz matrix is
formed from a bi-infinite extension of the normalized-moments
sequence, which here is
\[
\ldots, 0, 0, 0, 1, h_1( \balpha ), h_2( \balpha ), h_3( \balpha),
\ldots
\]
and the total non-negativity of the corresponding Toeplitz matrix is
again the numerical shadow of the Jacobi--Trudi identity.

The observations above lead to the solution of the following moment
problem.

\begin{corollary}
Suppose $\bmu = ( \mu_1, \ldots, \mu_m ) \in \R^m$, where $m \geq 2$,
let $\mu_0 := 1$ and let the $l \times l$ Toeplitz matrix
\[
D_l := \begin{pmatrix}
 \mu_1 & 1 & 0 & \cdots & 0 & 0 \\
 \mu_2 / 2! & \mu_1 & 1 & \cdots & 0 & 0 \\
 \mu_3 / 3! & \mu_2 / 2! & \mu_1 & \cdots & 0 & 0 \\
 \vdots & \vdots & \vdots & \ddots & \vdots & \vdots \\
 \mu_{l - 1} / ( l - 1 )! & \mu_{l - 2} / ( l - 2 )! & %
\mu_{l - 3} / ( l - 3 )! & \cdots & \mu_1 & 1 \\
 \mu_l / l! & \mu_{l - 1} / ( l - 1 )! & \mu_{l - 2} / ( l - 2 )! & %
 \cdots & \mu_2 / 2! & \mu_1
\end{pmatrix}
\]
for $l = 1$, \ldots, $m$. The following are equivalent.
\begin{enumerate}
\item There exists
$\balpha = ( \alpha_1, \ldots, \alpha_m ) \in ( 0, \infty )^m$ such
that $\bmu$ is the truncated moment sequence of the Hirschman--Widder
density $\Lambda_\balpha$.

\item The generating polynomial
\[
F( t ) := 1 + \sum_{l = 1}^m t^l \det D_l
\]
of the determinant sequence $( \det D_1, \ldots, \det D_m )$ has all
of its roots in $( -\infty, 0 )$, so is in the first Laguerre--P\'olya
class.

\item The sequence $( b_n )_{n = -\infty}^\infty$ of the form
\[
\ldots, 0, 0, 0, b_0 = 1, \det D_1, \ldots, \det D_m, 0, 0, 0, \ldots
\]
is a P\'olya frequency sequence, in that
$\det( b_{p_j - q_k} )_{j, k = 1}^l$ is non-negative for any choice of
integers $p_1 < \cdots < p_l$ and $q_1 < \cdots < q_l$, where
$l \geq 1$.
\end{enumerate}
\end{corollary}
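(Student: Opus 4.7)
My plan is to establish $(1) \iff (2)$ using Theorem~\ref{Tsymm} together with the Jacobi--Trudi identity already deployed there, and $(2) \iff (3)$ by appealing to the classical Aissen--Schoenberg--Whitney characterisation of finitely supported P\'olya frequency sequences.

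For $(1) \iff (2)$, the pivotal identification has already been made in equation~(\ref{Epolynom}): whenever $\mu_p / p! = h_p( \balpha )$ for some $\balpha \in ( 0, \infty )^m$, the dual Jacobi--Trudi identity yields $\det D_l = e_l( \balpha )$ for $l = 1, \ldots, m$, whence
\[
F( t ) = 1 + \sum_{l = 1}^m e_l( \balpha ) t^l =
\prod_{j = 1}^m ( 1 + \alpha_j t )
\]
has all of its roots at $-\alpha_j^{-1} \in ( -\infty, 0 )$. Combined with Theorem~\ref{Tsymm}, this immediately gives $(1) \implies (2)$. For the converse, suppose $F$ factors as above with $\alpha_j > 0$; then $\det D_l = e_l( \balpha )$ for each $l$, and I must recover $\mu_p / p! = h_p( \balpha )$ for $p = 1, \ldots, m$. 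Expanding $\det D_l$ along its first column, and using that the cofactor of the entry $\mu_l / l!$ in position $( l, 1 )$ is the determinant of an $( l - 1 ) \times ( l - 1 )$ lower-triangular matrix with unit diagonal, I see that $\det D_l$ is linear in $\mu_l$ with nonzero leading coefficient $( -1 )^{l + 1}/l!$ and otherwise depends only on $\mu_1, \ldots, \mu_{l - 1}$. Thus $\bmu \mapsto ( \det D_1, \ldots, \det D_m )$ is a polynomial bijection of $\R^m$; since the same construction sends $( h_1( \balpha ), \ldots, h_m( \balpha ) )$ to $( e_1( \balpha ), \ldots, e_m( \balpha ) )$, injectivity forces $\mu_p / p! = h_p( \balpha )$ for $p \leq m$, and Theorem~\ref{Tsymm} then delivers $(2) \implies (1)$.

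For $(2) \iff (3)$, I invoke the classical Aissen--Schoenberg--Whitney characterisation of P\'olya frequency sequences via generating functions: a finitely supported bi-infinite sequence $( b_n )_{n \in \Z}$ that vanishes off $\{ 0, 1, \ldots, m \}$ and has $b_0 = 1$ is a P\'olya frequency sequence if and only if its generating polynomial $\sum_{k = 0}^m b_k z^k$ lies in the first Laguerre--P\'olya class, equivalently, splits over $\R$ with all roots in $( -\infty, 0 )$. Applied with $b_k := \det D_k$, this yields $(2) \iff (3)$ directly.

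The one place that demands real care is the invertibility of $\bmu \mapsto ( \det D_1, \ldots, \det D_m )$, without which $(2) \implies (1)$ would not close; this is rendered transparent by the lower-Hessenberg--Toeplitz structure of $D_l$. Everything else reduces to material already developed in the proof of Theorem~\ref{Tsymm}, together with a single appeal to a classical theorem.
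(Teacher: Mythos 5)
Your proof is correct and follows essentially the same route as the paper: $(1)\Rightarrow(2)$ comes straight from the proof of Theorem~\ref{Tsymm} via the Jacobi--Trudi identity, $(2)\Rightarrow(1)$ exploits the triangular dependence of $\det D_l$ on $\mu_l$ (the paper expands along the bottom row rather than the first column, but the resulting invertibility of $\bmu \mapsto (\det D_1,\ldots,\det D_m)$ is the same observation), and $(2)\Leftrightarrow(3)$ is the Aissen--Edrei--Schoenberg--Whitney theorem cited in the paper as \cite{AESW}. Your version merely spells out the invertibility step a bit more explicitly than the paper does.
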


\begin{proof}
That $(1) \implies (2)$ follows from the proof of Theorem~\ref{Tsymm}.
For the reverse implication, it follows from $(2)$ that there exists
$\balpha \in ( 0, \infty )^m$ with $e_l( \balpha ) = \det D_l$ for
$l = 1$, \ldots, $m$. Expanding the determinant along its bottom row
shows that~$\mu_l$ can be recovered from $\det D_l$ and $\mu_1$,
\ldots, $\mu_{l - 1}$, so $\Lambda_\balpha$ has moments as required.

The equivalence $(2) \iff (3)$ follows from immediately from
\cite[Theorem~6]{AESW}.
\end{proof}

\begin{remark}\label{Rmoments}
We conclude this section with a line of enquiry motivated by
historical considerations. P\'olya~showed in his 1915 paper
\cite{Polya1915} that a function $\Psi$ in the Laguerre--P\'olya class
with the expansion
\begin{equation}\label{EreciprocalLP}
\frac{1}{\Psi( s )} = \mu_0 - \frac{\mu_1}{1!} s + %
\frac{\mu_2}{2!} s^2 - \frac{\mu_3}{3!} s^3 + \cdots
\end{equation}
and such that $\Psi( 0 ) > 0$ has Hankel determinant
\begin{equation}\label{hankel-det}
\det\begin{pmatrix}
\mu_0 & \mu_1 & \cdots & \mu_n \\
\mu_1 & \mu_2 & \cdots & \mu_{n + 1} \\
\vdots & \vdots & \ddots & \vdots \\
\mu_n & \mu_{n + 1} & \cdots & \mu_{2 n}
\end{pmatrix} > 0 \qquad \text{for } n = 0, 1, 2, \ldots.
\end{equation}
In 1920, Hamburger went the other way \cite{Hamburger1920} and deduced
the existence of an underlying density function $\Lambda$ from the
positivity of these determinants. This is precisely what led
Schoenberg to study these maps and to develop the theory of P\'olya
frequency functions. As observed by Schoenberg in~\cite{Schoenberg51},
the coefficient $\mu_p$ in (\ref{EreciprocalLP}) is precisely the
$p$th moment of the P\'olya frequency function $\Lambda$. When
$\Lambda = \Lambda_\balpha$, each normalized moment is a complete
homogeneous symmetric polynomial in the entries of $\balpha$, and so
this Hankel determinant is also a symmetric polynomial. The
Jacobi--Trudi identity shows that the Toeplitz moment determinant in
the proof of Theorem~\ref{Tsymm} is \emph{monomial positive} (a
positive linear combination of monomials) and even
\emph{Schur positive} (a positive linear combination of Schur
polynomials). It is natural to ask if something similar applies to the
Hankel moment determinant, but it turns out that neither property
holds in general.  For example, if $m = 2$ then
\[
\mu_p = p! \sum_{l = 0}^p \alpha_1^l \alpha_2^{p - l} \qquad %
\text{for } p = 0, 1, 2, \ldots,
\]
so
\[
\det\begin{pmatrix} \mu_0 & \mu_1 \\ \mu_1 & \mu_2 \end{pmatrix} = %
\alpha_1^2 + \alpha_2^2,
\]
which is monomial positive but not Schur positive, and
\[
\det\begin{pmatrix}
 \mu_0 & \mu_1 & \mu_2\\
 \mu_1 & \mu_2 & \mu_3\\
 \mu_2 & \mu_3 & \mu_4
\end{pmatrix} = %
4 \alpha_1^6 + 12 \alpha_1^4 \alpha_2^2 - %
8 \alpha_1^3 \alpha_2^3 + 12 \alpha_1^2 \alpha_2^4 + 4 \alpha_2^6,
\]
which is not even monomial positive.

We have that $\Psi( s ) = \prod_{j = 1}^m ( 1 + \alpha_j s )$, so the
formula (\ref{Emoments}) for the moments and the expansion
(\ref{EreciprocalLP}) give another proof of
Corollary~\ref{Cgeneratingfn}:
\[
\prod_{j = 1}^m ( 1 - \alpha_j s )^{-1} = \frac{1}{\Psi(-s)} = %
\sum_{l = 0}^\infty \frac{\mu_l}{l!} s^l = %
\sum_{l = 0}^\infty h_l( \alpha_1, \ldots, \alpha_m ) s^l.
\]
This line of thinking, together with (\ref{Epolynom}), also provides
the identity
\[
\sum_{l = 0}^\infty \frac{ \mu_l}{l!} s^l = %
\prod_{j = 1}^m ( 1 - \alpha_j s )^{-1} = %
\frac{1}{F( -s )} = %
\frac{1}{1 + \sum_{k = 1}^m f_k( \mu_1, \ldots, \mu_k ) ( -s )^k}.
\]
Assuming, as is common in some applied areas, that moments are the
only quantities available via measurements, we elaborate an alternative
reconstruction scheme which parallels well known algorithms used in
inverse problems. More specifically, we note that
$( \mu_j )_{j = 0}^\infty$, as a Stieltjes moment sequence, is
characterized by the positivity of the Hankel determinants of the form
(\ref{hankel-det}) as well as those with a shifted index,
\[
\det( \mu_{j + k + 1} )_{j, k = 0}^n > 0 %
\qquad \text{for } n = 0, 1, 2, \ldots.
\]
For a proof of this, see, for instance \cite[\S67]{Perron}. Next, we
note that checking whether the polynomial
$1 + \sum_{k = 1}^m f_k( \mu_1, \ldots, \mu_k ) ( -s )^k$ has only
real and negative roots can be achieved by using its Sturm sequence.

A classical result attributed to Kronecker asserts that the formal
series
\[
\sum_{j = 0}^\infty \gamma_j s^j
\]
represents a rational function if and only there exist positive
integers $n$ and $r_0$ such that
\[
\det( \gamma_{r + j + k} )_{j, k = 0}^n = 0 %
\qquad \text{for all } r \geq r_0,
\]
and the minimum value of $r_0$ is the degree of the denominator of the
rational function. In principle, to verify this criterion involves an
infinite sequence of vanishing Hankel determinants. Here, the hidden
positivity in the moments of a Hirschman--Widder distribution allows a
drastic reduction of Kronecker's criterion, to a single vanishing
determinant.

To this aim, and in order to identify the denominator
$\Psi( s ) = F( s )$ in the moment generating series without splitting
it into factors, we appeal to the theory of cumulants. The cumulant
series for $\Lambda_\balpha$ is
\[
K( s ) = \sum_{k = 1}^\infty \nu_k s^k := %
\log \sum_{l = 0}^\infty \frac{ \mu_l}{l!} s^l = %
-\sum_{j = 1}^m \log( 1 - \alpha_j s )
\]
and this is convergent whenever
$| s | < \min\{ \alpha_1^{-1}, \ldots, \alpha_m^{-1} \}$. Its
derivative has the series representation
\[
K'( s ) = \sum_{k = 1}^\infty k \nu_k s^{k - 1} = %
\sum_{j = 1}^m \frac{\alpha_j}{1 - \alpha_j s} = %
\sum_{j = 1}^m ( \alpha_j + \alpha_j^2 s + \alpha_j^3 s^2 + \cdots ),
\]
so the $k$th cumulant
\begin{equation}\label{Ecumulant}
\nu_k = \frac{1}{k} \sum_{j = 1}^m \alpha_j^k %
\qquad ( k = 1, 2, 3, \ldots ).
\end{equation}
As $K'$ is the Stieltjes transform of finitely many point masses, the
cumulant-generating function admits the representation
\[
K( s ) = %
\int_0^\infty \log\Bigl(\frac{t}{t - s}\Bigr) \intd\sigma( t ) %
\qquad ( | s | < \min\{ \alpha_1^{-1}, \ldots, \alpha_m^{-1} \} ),
\]
where $\sigma$ is the sum of unit point masses at $\alpha_1^{-1}$,
$\alpha_2^{-1}$, \ldots, $\alpha_m^{-1}$. In other words, the
cumulant-generating function $K$ coincides, up to a constant and for
small values of its argument, with the logarithmic potential of
equally distributed masses at the reciprocals of the entries
of~$\balpha$.

As a final step in our reconstruction process, we apply the Pad\'e
approximation scheme to the series representing $K'( s )$. We know
from Kronecker's criterion that the minimal choice of $n$ such that
$\det\bigl( ( k + j + 1 ) \nu_{j + k + 1} \bigr)_{j, k = 0}^n = 0$ is
when $n = m$. Elementary matrix-algebra operations single out a unique
pair of polynomials, a monic polynomial $P( z )$ with degree $m$
and~$Q( z )$ with degree $m - 1$, such that
\[
P( z ) \sum_{k = 1}^{m + 1} \frac{k \nu_k}{z^k}  = %
Q( z ) + O\Bigl( \frac{1}{z^2} \Bigr).
\]
Since $K'( s )$ is the Stieltjes transform of a positive measure, we
infer the equality of formal series
\[
\frac{1}{z} K'\Bigl( \frac{1}{z} \Bigr) = \frac{Q( z )}{P( z )}.
\]
Details of the algorithmic aspects of this derivation are due to
Stieltjes. They are masterfully exposed in Chapter~9 of \cite{Perron}.
It follows that
\[
\frac{F'( -s )}{F( -s )} = %
K'( s ) = \frac{s^{-1} Q( s^{-1} )}{P( s^{-1} )},
\]
and so we obtain the identity
\[
1 + \sum_{k = 1}^m f_k( \mu_1, \ldots, \mu_k ) ( -s )^k = %
s^m P( s^{-1} ).
\]
We stress that this Pad\'e approximation procedure identifies the
denominator in the moment generating series without computing its
zeros.
\end{remark}

Above, we touched on the old and eternally dominant theme of inversion
of the Laplace transform. An effective method of inversion for
rational functions without relying on simple-fraction decompositions
appears in \cite{Longman}. For a general overview of Laplace
transform-inversion, we refer to the monograph \cite{Cohen}.

\section{Orbital integrals}\label{Sorbital}

The Fourier transforms of P\'olya frequency functions can be viewed as
characteristic functions of certain unitarily invariant measures defined
on the space of infinite Hermitian matrices. In this section, we sketch
the basic framework and some key formulas, following the ample and
self-contained article of Olshanski and Vershik \cite{OV}. An alternate
reference, with complete proofs and a lucid global perspective on the
same topics is Faraut's article \cite{Faraut}.

Let $U(n)$ denote the compact group of unitary transformations of $\C^n$,
let $\Omega$ denote the orbit under unitary conjugation
$S \mapsto U S U^\ast$ of an $n \times n$ Hermitian matrix $S \in H(n)$,
and let $\mu$ denote the normalized $U(n)$-invariant measure carried by
$\Omega$. The Fourier transform or characteristic function of $\mu$ is
\begin{equation}\label{Eorbital}
f_S : H(n) \to \C; \ B \mapsto \int_\Omega e^{\I \tr(B M)} \mu( \rd M ).
\end{equation}
This function is invariant under unitary conjugation, so that
\[
f_S( U B U^\ast ) = f_S( B ) \qquad \text{for all } U \in U(n)
\text{ and } B \in H(n).
\]
Hence $f_S( B )$ depends only on the eigenvalues of $B$ and is a
symmetric function of these eigenvalues. Furthermore, as a Fourier
transform, the function $f_S$ is positive definite.

We now consider the inductive limit of such measures and functions
defined on the space $H(\infty) = \bigcup_n H(n)$ of Hermitian matrices
of arbitrary size. The functions, normalized by the condition $f( 0 ) =
1$, form a convex set and the extremal points of this set are
multiplicative, in the sense that
\[
f\bigl( \diag(b_1, b_2, \ldots, b_m) \bigr) = F( b_1 ) F( b_2 ) \cdots F( b_m )
\]
for some function of a real variable $F$. This situation occurs precisely
when the corresponding unitarily invariant measure $\mu$ on the union
$H(\infty)$ is ergodic.
The main classification theorem of \cite{OV}, Theorem~2.9, asserts the
existence of a bijective correspondence between ergodic, unitarily
invariant probability measures on $H(\infty)$ and P\'olya frequency
functions. To be more precise, $F$ is the Fourier transform of a P\'olya
frequency function attached to the ergodic measure $\mu$. Moreover,
specific invariant measures provide the building blocks of the class of
P\'olya frequency functions \cite[Corollaries~2.5 to~2.7]{OV}.

Now suppose $S = \diag( a_1, \ldots, a_m )$, where $a_1$, \ldots, $a_m$
are positive, and let $B = E_{11} = \diag( 1, 0, 0, \ldots, 0 )$. Passing
lightly over the technicalities required to extend~$f_S$, and using the
symmetry $f_S( \I x B ) = f_B( \I x S )$, which follows from the tracial
property, we have that
\begin{equation}\label{Eorbit}
f_S( \I x E_{11} ) = \int_{\Omega'} \exp\Bigl(-x  \sum_{j=1}^m a_j
|z_j|^2 \Bigr) \sigma(\rd z) \qquad ( x > 0 )
\end{equation}
where $\Omega' = S^{2 m - 1} \cong U( m )/ U( m - 1 )$ is the unit sphere
in $\C^m$ and $\sigma$ is the normalized rotationally invariant (uniform)
measure on the sphere.

We claim that, up to proper normalization, the above spherical average is
equal, to the Hirschman--Widder distribution $\Lambda_\balpha$ at the
point~$x$, where $\balpha = ( a_1^{-1}, \ldots, a_m^{-1} )$.
The alert reader will recognize the expression (\ref{Eorbit}) as a
Harish-Chandra--Itzykson--Zuber integral \cite{hc,iz}. A variety of
similar integral representations have recently been proposed as central
technical ingredients in random matrix theory \cite{KK,KKS,KR,FKK}.

\subsection{Explicit formulas}

The aim of this brief subsection is to describe an explicit link between
the spherical integral $f_S(B)$ above and Hirschman--Widder densities.
Further details, including complete proofs, are contained in
\cite[Section 5]{OV}.
Let $\ba = ( a_1, \ldots, a_m ) \in \R^m$ and $\bb = ( b_1, \ldots, b_m )
\in \C^m$ have corresponding diagonal matrices $S = \diag \ba \in H(m)$
and $B = \diag \bb$ respectively.
As in (\ref{Eorbital}) we let $f_S$ denote the characteristic function of
the $U(m)$-invariant probability measure $\mu$ with support $\Omega$,
where $\Omega$ is the $U(m)$-orbit of $S$ under conjugation:
\[
f_S(B) := \int_{U(m)} e^{\I \tr(B U S U^*)}\ \intd U = \int_\Omega 
e^{\I \tr(B M)} \mu( \rd M ).
\]
Since $f_S$ is entire and symmetric as a function of the coordinates of
$\bb$, it admits a Taylor-series expansion that is convergent everywhere,
so also a convergent expansion in terms of Schur polynomials:
\[
f_S( \diag \bb ) = \sum_\nu c_\nu s_\nu( \bb ),
\]
where the sum runs over Young diagrams with at most $m$ rows. A
computation by Olshanski and Vershik using characters of $U(m)$ and
change-of-bases formulas between symmetric power-sum polynomials and Schur
polynomials provides closed-form expressions for the coefficients
$c_\nu$: see \cite[Theorem~5.1]{OV}. Indeed, this strategy
appeared in explicit computations of Gel'fand and Naimark \cite{GN}, and
quite remarkably in multivariate statistics: see James \cite{James} and
the comments in \cite{Farrell}. From here, one derives the following
expansions: see \cite[Corollaries~5.2 and~5.4]{OV}.

\begin{proposition}\label{POV}
If the tuples $\ba = ( a_1, \ldots, a_m ) \in \R^m$ and $\bb = ( b_1,
\ldots, b_m ) \in \C^m$ each have distinct coordinates and $S = \diag
\ba$ then the orbital integral $f_S$ is given by the
Harish-Chandra--Itzykson--Zuber formula:
\[
f_S( -\I \diag \bb ) = \frac{\prod_{j = 0}^{m - 1} j!}{V( \ba ) V( \bb )}
\det\begin{pmatrix}
e^{b_1 a_1} & e^{b_1 a_2} & \cdots & e^{b_1 a_m}\\
e^{b_2 a_1} & e^{b_2 a_2} & \cdots & e^{b_2 a_m}\\
 \vdots & \vdots & \ddots & \vdots \\
e^{b_m a_1} & e^{b_m a_2} & \cdots & e^{b_m a_m}
\end{pmatrix}.
\]
If instead $B = \diag( 1, 0, \ldots, 0 ) = E_{11}$ then
\[
f_S( -\I x B ) = ( m - 1 )! \sum_{j = 0}^\infty \frac{h_j( a_1, \ldots,
a_m )}{( j + m - 1 )!} x^j,
\]
where $h_j$ is the $j$th complete homogeneous symmetric polynomial.
\end{proposition}

In particular, if $a_1$, \ldots, $a_m$ are positive and distinct, and $x
> 0$ then, by the second part of Proposition~\ref{POV} and
(\ref{Emaclaurinhw}),
\[
f_S( \I x E_{11} ) = %
( m - 1 )! ( -x )^{1 - m} \sum_{n = m - 1}^\infty \frac{h_{n - m + 1}(
a_1, \ldots,  a_m )}{n!} ( -x )^n = %
\frac{( m - 1 )! x^{1 - m}}{a_1 \cdots a_m} \Lambda_\balpha( x ),
\]
where $\balpha = ( a_1^{-1}, \ldots,  a_m^{-1} )$.

In conclusion, the Hirschman--Widder density possesses the following
integral and determinantal representations: if $x > 0$ 
and $\sigma(\rd z)$ denotes the normalized uniform measure on the sphere
$S^{2m-1}$, then
\begin{align}
\Lambda_\balpha( x ) & = \frac{a_1 \cdots a_m}{( m - 1 )!} x^{m - 1} %
\int_{S^{2 m - 1}} \exp\Bigl(-x \sum_{j=1}^m a_j |z_j|^2 \Bigr) \sigma(
\rd z) \\
& = \frac{a_1 \cdots a_m}{V( \ba )} \det\begin{pmatrix}
e^{-a_1 x} & e^{-a_2 x} & e^{-a_3 x} & \cdots & e^{-a_m x} \\
1 & 1 & 1 & \cdots & 1 \\
a_1 & a_2 & a_3 & \cdots & a_m\\
\vdots & \vdots & \vdots & \ddots & \vdots\\
a_1^{m - 2} & a_2^{m - 2} & a_3^{m - 2} & \cdots & a_m^{m - 2}
\end{pmatrix}.
\end{align}
The second representation can be obtained from the first identity in
Proposition~\ref{POV} by taking $\bb = ( -x, 0, y_1, \ldots, y_{m - 2} )$
and taking successively the $j$th partial derivative at zero with respect
to $y_j$ for $j = 1$ to $m - 2$.
Thus one has an alternative, shorter route for proving many of the
results contained in the previous section, if one accepts from the
beginning these two complementary formulas for the Hirschman--Widder
density~$\Lambda_\balpha$.
We leave the choice between this and our self-contained approach to the
reader.

\section{Proof of the main result}

With the results of the preceding section at hand, we now prove the
main result of this paper. We first introduce some notation which will be
used in the proof and in a later result.

\begin{definition}\label{Dnotation}
Given an integer $m \geq 2$ and a finite set $K$ of positive integers,
we let
\[
\Delta_m( K ) := \bigcup_{k \in K} \Delta_m( k ),
\]
where
\[
\Delta_m( k ) := %
\{ \bj = ( j_1, \ldots, j_m ) \in \Z^m : %
j_1, \ldots, j_m \geq 0, \ j_1 + \cdots + j_m = k \}
\]
is the set of integer-lattice points in the scaled standard
$m$-simplex. For any $k \geq 1$, any
$\bj = ( j_1, \ldots, j_m )  \in \Delta_m( k )$ and any
$\ba = ( a_1, \ldots, a_m ) \in \R^m$, we let
\[
\binom{k}{\bj} = \frac{k!}{\prod_{l = 1}^m j_l!}, \qquad
\ba^\bj := \prod_{l = 1}^m a_l^{j_l}, \quad  \text{and} \quad
\bj \cdot \ba := \sum_{l = 1}^m j_l a_l.
\]
\end{definition}

\begin{proof}[Proof of Theorem~\ref{T1sidedexample}]

\textit{Part }(1).
We note first that if the polynomial $p$ is such that $p( 0 ) \neq 0$
and $\Lambda_\balpha$ is a Hirschman--Widder density for some
$\balpha \in ( 0, \infty )^m$, then $p \circ \Lambda_\balpha$ equals
$p( 0 )$ on $( -\infty, 0 )$, so is not integrable. Also, we have that
$c \Lambda_\balpha$ is negative on $( 0, \infty )$ if $c < 0$. Thus we
need only to consider polynomials of degree at least two with zero
constant term for the remainder of this proof.

Suppose first that $m \geq 4$. To construct the null set
$\cN \subset ( 0, \infty )^m$, we begin as follows. For any integer
$n \geq 2$, we let
\[
\cK_n := \{ K \subset \{ 1, \ldots, n \} : n \in K \}
\]
denote the set of subsets of $\{ 1, \ldots, n \}$ containing $n$ and,
for any $K \in \cK_n$, we define the non-zero multivariate polynomial
$P_K$ by setting $P_K = f_1 - f_2$, where
\begin{equation}\label{Ef1f2}
f_i( \ba ) := ( -1 )^{( i - 1 ) n} V( \widehat{\ba}_i )^n %
\prod_{\bj \in \Delta_m( K ) \setminus \{ n \be_i \}} %
( \bj - n \be_i ) \cdot \ba \qquad ( \ba \in \R^m, \ i = 1, 2 ),
\end{equation}
with $\widehat{\ba}_1 = ( a_2, a_3, \ldots, a_m )$ and
$\widehat{\ba}_2 = ( a_1, a_3, \ldots, a_m )$, in accordance with
Definition~\ref{Dhat}, and
$\{ \be_1, \be_2, \ldots, \be_m \}$ being the standard basis
of~$\R^m$.

To see that $P_K \neq 0$, we observe that the $a_1$-degree of $f_1$
exceeds that of $f_2$, which gives the claim. To see this, we note
first that every factor in the product in $f_1$ has a linear
$a_1$-term, as $n$ is the maximum element of $K$, so
\[
\deg_{a_1} f_1 = | \Delta_m( K ) | - 1.
\]
However, the Vandermonde determinant $V( \widehat{\ba}_2 )$
contributes $m - 2$ linear $a_1$-terms to $f_2$ and therefore
\begin{align*}
\deg_{a_1} f_2 & = n ( m - 2 ) + | \Delta_m( K ) | - %
| \{ \bj \in \Delta_m( K ) : j_1 = 0 \} | \\
 & = n ( m - 2 ) + | \Delta_m( K ) | - | \Delta_{m - 1}( K ) |.
\end{align*}
Using the fact that $m \geq 4$ and $n \geq 2$, it follows that
\begin{align*}\label{Edegree}
\deg_{a_1} f_1 - \deg_{a_1} f_2 & = %
 | \Delta_{m - 1}( K )| - n ( m - 2 ) - 1 \\
 & \geq | \Delta_{m - 1}( n ) | - n ( m - 2 ) - 1 \\
 & = \binom{n + m - 2}{m - 2} - n ( m - 2 ) - 1 \\
 & > \frac{( n + m - 2 ) \cdots ( n + 2 ) ( n + 1 )}{( m - 2 )!} - %
( n + 1 ) ( m - 2 ) \\
 & \geq ( n + 1 ) \Bigl( %
\frac{m ( m - 1 ) \cdots 4}{( m - 2 )!} - ( m - 2 ) \Bigr) \\
 & = ( n + 1 ) \Bigl( \frac{m ( m - 1 )}{6} - ( m - 2 ) \Bigr) \\
 & = \frac{( n + 1 ) ( m - 3 ) ( m - 4 )}{6} \geq 0.
\end{align*}

Let $\cZ_K$ denote the zero locus of $P_K$ in $\R^m$, which is a null
set because $P_K$ is non-zero. With $S_m$ denoting the group of
permutations of $\{ 1, \ldots, m \}$, we let $\sigma \in S_m$ act on
subsets of $\R^m$ by permuting coordinates, so that
\[
\sigma( A ) := %
\{ ( a_{\sigma( 1 )}, \ldots, a_{\sigma( m )} ) : %
\ba = ( a_1, \ldots, a_m ) \in A \} %
\qquad \text{for any } A \subset \R^m,
\]
and note that this action is measure preserving. Finally, we let
\begin{align}
H_\bq & := \{ \bx \in \R^m : \bq \cdot \bx = 0 \}, \\
\tcN & := ( 0, \infty )^m \cap \biggl( %
\bigcup_{\bq \in \Q^m \setminus \{ \bZ \}} H_\bq \cup %
\bigcup_{\sigma \in S_m} \bigcup_{n = 2}^\infty %
\bigcup_{K \in \cK_n} \sigma( \cZ_K ) \biggr) \\
\text{and} \quad \cN & := %
\{ ( a_1^{-1}, \ldots, a_m^{-1} ) : \ba \in \tcN \}.
\end{align}
As a countable union of null sets, the set $\tcN$ is null.
Furthermore, the set $\cN$ is also null. To see this, we note that the
self-inverse map
\[
f : ( 0, \infty )^m \to ( 0, \infty )^m; \ %
( x_1, \ldots, x_m ) \mapsto ( x_1^{-1}, \ldots, x_m^{-1} )
\]
is Lipschitz when restricted to $[ l^{-1}, l ]^m$ for any positive
integer $l$, so preserves null sets there, and
\[
\cN = \bigcup_{l = 1}^\infty f\bigr( \tcN \cap [ l^{-1}, l ]^m \bigr).
\]

Let $\balpha \in ( 0, \infty )^m \setminus \cN$. Then the reciprocals
of the entries of $\balpha$ are linearly independent over $\Q$, since
they are contained in no hyperplane of the form $H_\bq$, so they are
distinct. Thus, we may find $\ba \in ( 0, \infty )^m \setminus \tcN$
such that $a_1 < \cdots < a_m$, the sets
$\{ a_1^{-1}, \ldots, a_m^{-1} \}$ and
$\{ \alpha_1, \ldots, \alpha_m \}$ are equal and
$P_K( \ba ) \neq 0$ for any $n \geq 2$ and any $K \in \cK_n$.

Now let~$\bc$ be as in Proposition~\ref{P2}, so that
$\Lambda_{\ba, \bc} = \Lambda_\balpha$. If
\[
c := \frac{a_1 \cdots a_m}{V( \ba )}
\]
then
\begin{equation}\label{Edentries}
\bd = ( d_1, \ldots, d_m ) := c^{-1} \bc = %
\bigl( V(\widehat{\ba}_1 ), -V( \widehat{\ba}_2 ), \ldots, %
\pm V( \widehat{\ba}_m ) \bigr).
\end{equation}
Let $\Lambda := \Lambda_{\ba, \bd} = c^{-1} \Lambda_\balpha$. If $p$
is a polynomial of degree at least two such that $p( 0 ) = 0$ then
$p \circ \Lambda_\balpha = q \circ \Lambda$, where the polynomial $q$
is such that $q( x ) = p( c x )$, so $q$ also has degree at least two
and no constant term. Thus it suffices to show that $p \circ \Lambda$
is not a P\'olya frequency function, where
$p( x ) = \sum_{k \in K} r_k x^k$, with $K \in \cK_n$ for some
$n \geq 2$ and $r_k \neq 0$ for all $k \in K$.

For any non-negative integer $k$, an explicit computation reveals that
the bilateral Laplace transform
\begin{align}\label{Etemp}
\cB\{ \Lambda^k \}( s ) & = %
\sum_{\bj \in \Delta_m( k )} \binom{k}{\bj} \cB\{ \bd^\bj
e^{ -( \bj \cdot \ba ) x} \}( s ) = %
\sum_{\bj \in \Delta_m( k )} \binom{k}{\bj}
\frac{\bd^\bj}{s + \bj \cdot \ba} = \frac{p_k( s )}{q_k( s )}
\end{align}
for polynomials $p_k$ and $q_k$, where the notation is as in
Definition~\ref{Dnotation} and
\[
q_k( s ) := \prod_{\bj \in \Delta_m( k )} ( s + \bj \cdot \ba ).
\]
Thus
\[
\cB\{ p \circ \Lambda \}( s ) = %
\sum_{k \in K} r_k \cB\{ \Lambda^k \}( s ) = %
\sum_{k \in K} r_k \frac{p_k( s )}{q_k( s )} = %
\frac{P( s )}{Q( s )},
\]
where
\[
Q( s ) := \prod_{k \in K} q_k( s ), \qquad %
\widehat{q}_k( s ) := \prod_{l \in K \setminus \{ k \}} q_l( s ) %
\quad \text{and} \quad %
P( s ) := \sum_{k \in K} r_k p_k( s ) \widehat{q}_k( s ).
\]
For any $k \in K$, the polynomial $q_k$ has the set of roots
$\{ -\bk \cdot \ba : \bk \in \Delta_m( k ) \}$. As the entries
of~$\ba$ are linearly independent over $\Q$, the roots of $Q$ are
simple. Furthermore, if $\bk \in \Delta_m( k )$ then
$\widehat{q}_j( s ) = 0$ whenever $j \in K \setminus \{ k \}$, so
\begin{equation}\label{Etemp2}
P( -\bk \cdot \ba ) = r_k p_k( -\bk \cdot \ba) \widehat{q}_k( -\bk \cdot
\ba).
\end{equation}
It follows from the definitions that
\[
\widehat{q}_k( -\bk \cdot \ba) = \prod_{ \bj \in \Delta_m( K ) \setminus
\Delta_m( k )} (\bj - \bk) \cdot \ba,
\]
and to compute $p_k( -\bk \cdot \ba)$, we use the final equality
in~(\ref{Etemp}) and the definition of $q_k$ to see that
\[
p_k( s ) = \sum_{\bj \in \Delta_m( k )} \binom{k}{\bj} \bd^\bj 
\prod_{\bj' \in \Delta_m( k ) \setminus \{ \bj \}} ( s + \bj' \cdot \ba ).
\]
Taking $s = -\bk \cdot \ba$ here and combining this with~(\ref{Etemp2})
shows that
\begin{equation}\label{EPpoly}
P( -\bk \cdot \ba ) = %
r_k \binom{k}{\bk} \bd^\bk
\prod_{\bj \in \Delta_m(K) \setminus \{ \bk \}} %
( \bj - \bk ) \cdot \ba %
\neq 0,
\end{equation}
again using $\mathbb{Q}$-linear independence. Thus $P$ does not vanish at
any root of $Q$, and so Theorem~\ref{TschoenbergPF} implies that $p \circ
\Lambda$ is not a P\'olya frequency function as long as $P$ is not
constant. However,
\begin{align*}
& P( -n \be_1 \cdot \ba ) - P( -n \be_2 \cdot \ba ) \\
 & = r_n \biggl( V( \widehat{\ba}_1 )^n %
\prod_{\bj \in \Delta_m( K ) \setminus \{ n \be_1 \}} %
( \bj - n \be_1 ) \cdot \ba + %
( -1 )^{n + 1} V( \widehat{\ba}_2 )^n %
\prod_{\bj \in \Delta_m( K ) \setminus \{ n \be_2 \}} %
( \bj - n \be_2 ) \cdot \ba \biggr) \\
 & = r_n P_K( \ba ) \neq 0,
\end{align*}
and this completes the proof whenever $m \geq 4$.

We now consider the case $m = 3$. When $p$ is a monomial, this was
resolved in previous work~\cite[Lemma~11.2]{BGKP-TN} whenever the
reciprocals of the entries of $\balpha$ are linearly independent over
$\Q$, so for $\Lambda$ as above. It remains to verify that
$p \circ \Lambda$ is not a P\'olya frequency function when
$p( x ) = \sum_{k \in K} r_k x^k$, with $r_k \neq 0$ for all $k \in K$
and $K \in \cK_n$ containing at least two elements. In this case, the
polynomial $P_K$ is non-zero, since we have that
\[
\deg_{a_1} f_1 - \deg_{a_1} f_2 = | \Delta_2( K )| - n - 1 > %
| \Delta_2( n ) | - n - 1 = 0.
\]
Thus we may proceed as above, as long as we take only $K$ containing
at least two elements in the definition of $\tcN$.

\textit{Part }(2). We consider first the case where
$\alpha_1 = \cdots = \alpha_m = \alpha$. The corresponding
Hirschman--Widder density $\Lambda_\balpha$ has bilateral Laplace
transform
\[
\cB\{ \Lambda_\balpha \}( s ) = ( 1 + \alpha s )^{-m},
\]
and inverting this transform gives that
\[
\Lambda_\balpha( x ) = \bJ_{x \geq 0} \frac{\alpha^{-m}}{( m - 1 )!} %
x^{m - 1} e^{-\alpha^{-1} x}.
\]
It follows immediately that, for any natural number $n$, the function
$\Lambda_\balpha^n$ is a positive multiple of $\Lambda_\bbeta$, where
$\bbeta = ( \alpha n^{-1}, \alpha n^{-1}, \ldots, \alpha n^{-1} ) \in %
( 0, \infty )^{n ( m - 1 ) + 1}$ has all its entries equal, and so
$\Lambda_\balpha^n$ is a P\'olya frequency function.

Now suppose that the polynomial $p$ is not a positive multiple of a
monomial. If $p$ has a constant term then $p \circ \Lambda_\balpha$ is
not integrable, so we may assume that
$p( x ) = \sum_{k \in K} r_k x^k$, where $K$ is a finite set of
natural numbers with at least two elements and $r_k \neq 0$ for all
$k \in K$. Then
\[
\cB\{ p \circ \Lambda_\balpha \}( s ) = %
\sum_{k \in K} r_k c_k ( 1 + \alpha k^{-1} s )^{-k ( m - 1 ) - 1} = %
\frac{P( s )}{Q( s )},
\]
where $c_k > 0$ for all $k \in K$,
\[
Q( s ) := \prod_{k \in K} ( 1 + \alpha k^{-1} s )^{k ( m - 1 ) + 1} %
\quad \text{and} \quad %
P( s ) := \sum_{k \in K} r_k c_k %
\prod_{j \in K \setminus \{ k \}} %
( 1 + \alpha j^{-1} s )^{j ( m - 1 ) + 1}.
\]
The polynomial $P$ is non-constant, since each of the terms in the sum
are polynomials with distinct positive degrees. Furthermore, the roots
of $Q$ are of the form $-k \alpha^{-1}$ for~$k \in K$, and
\[
P( -k \alpha^{-1} ) = r_k c_k %
\prod_{j \in K \setminus \{ k \}} ( 1 - j^{-1} k )^{j ( m - 1 ) + 1} %
\neq 0.
\]
Hence $Q( s ) / P( s )$ is not the restriction of an entire function
and so $p \circ \Lambda_\balpha$ is not a P\'olya frequency function,
by Theorem~\ref{TschoenbergPF}.

It remains to consider the case where $a_j = a_1 + ( j - 1 ) \delta$
for $j = 1$, \ldots, $m$, where $\delta$ is positive and independent
of~$j$. We consider the case $m = 2$ first, so that
\begin{equation}\label{E2tom}
\Lambda_\balpha^n( x ) = %
\bJ_{x \geq 0} \Bigl( \frac{a_1 a_2}{\delta} \Bigr)^n %
\sum_{j = 0}^n \binom{n}{j} (-1)^j e^{-( n a_1 + j \delta ) x}
\end{equation}
for any natural number $n$. If
$\bb := ( n a_1, n a_1 + \delta, \ldots, n a_1 + n \delta )$ and
$\bd = ( d_0, \ldots, d_n )$ is such that $\Lambda_{\bb, \bd}$ is a
Hirschman--Widder density, then Proposition~\ref{P2} gives that
\[
d_j = b_j \prod_{k \neq j} \frac{b_k}{b_k - b_j} = %
\frac{1}{n! \delta^n} \prod_{k = 0}^n ( n a_1 + k \delta ) %
( -1 )^j \binom{n}{j}.
\]
Thus $\Lambda_\balpha^n$ is a positive multiple of the
Hirschman--Widder density $\Lambda_{\bb, \bd}$, so is itself a P\'olya
frequency function.
An alternative argument for the $m=2$ case, pointed out to us by one
of the referees, goes as follows. Since
\[
\Lambda_\balpha^n( s ) = %
\bJ_{x \geq 0} \Bigl( \frac{a_1 a_2}{\delta} \Bigr)^n e^{-n a_1 x} ( 1 -
e^{-\delta x} )^n,
\]
an explicit computation shows that
\begin{align*}
\Bigl( \frac{\delta}{a_1 a_2} \Bigr)^n \cB\{ \Lambda_\balpha^n \}( s ) & = %
\frac{1}{\delta} \int_0^\infty e^{-( s + n a_1) x} (1 - e^{-\delta x})^n
\delta \intd x\\
 & = \frac{1}{\delta} \int_0^1 y^{( s + n a_1) / \delta} ( 1 - y )^n
 y^{-1} \intd y\\
 & = \frac{1}{\delta} \beta\bigl( (s + n a_1) / \delta, n + 1 \bigr),
\end{align*}
where $\beta$ denotes the beta function. From this it follows that
\[
\frac{1}{\cB \{ \Lambda_\balpha^n \}(s)} = \delta \left(
\frac{\delta}{a_1 a_2} \right)^n \frac{\Gamma( (s + n a_1)/\delta +
(n+1))}{n! \, \Gamma((s + n a_1)/\delta)},
\]
which is clearly a polynomial in $s$. Hence $\Lambda_\balpha^n$ is a
P\'olya frequency function for all $n \geq 1$.

For $m \geq 3$, let $\bbeta := ( b_1^{-1}, b_2^{-1} )$, where
$b_1 := a_1 / ( m - 1 )$ and $b_2 := b_1 + \delta = a_m / ( m - 1 )$.
Then, by (\ref{E2tom}) and the previous working, the function
$\Lambda_\bbeta^{m -1}$ is a positive multiple of the
Hirschman--Widder density $\Lambda_\balpha$. Hence $\Lambda_\balpha^n$
is a positive multiple of the P\'olya frequency function
$\Lambda_\bbeta^{n ( m - 1 )}$, and so is a P\'olya frequency function
itself.

Finally, suppose that $\alpha_1 / \alpha_2$ is irrational and let
$p( x ) = \sum_{k \in K} r_k x^k$, where $K$ is a finite set of
natural numbers with at least two elements and $r_k \neq 0$ for all
$k \in K$; as noted above, we need only consider $p$ of this form.
With the previous notation, we have that
$\Lambda_\balpha^k = c^k \Lambda_\bbeta^{( m - 1 ) k}$ for some
$c > 0$, and therefore
\[
\cB\{ p \circ \Lambda_\balpha \}( s ) = \sum_{k \in K} r_k c_k %
\prod_{j = 0}^{( m - 1 ) k} ( s + k a_1 + j \delta )^{-1} = %
\frac{P( s )}{Q( s )},
\]
where $c_k \neq 0$ for any $k \in K$,
\[
Q( s ) := \prod_{k \in K} \prod_{j = 0}^{( m - 1 ) k} %
( s + k a_1 + j \delta )
\]
and
\[
P( s ) := \sum_{k \in K} r_k c_k %
\prod_{v \in K \setminus \{ k \}} \prod_{u = 0}^{( m - 1 ) v} %
( s + v a_1 + u \delta ).
\]
The roots of $Q$ are simple, since if $( j, k )$ and $( j', k' )$ are
distinct then
\[
k a_1 + j \delta = k' a_1 + j' \delta \iff %
\bigl( ( k - j ) - ( k' - j' ) \bigr) a_1 + j a_2 = ( j' - j ) a_2 %
\implies \frac{\alpha_1}{\alpha_2} = \frac{a_2}{a_1} \in \Q.
\]
It follows that if $k \in K$ and
$j \in \{ 0, 1, \ldots, ( m - 1 ) k \}$ then
\[
P( -k a_1 - j \delta ) = r_k c_k %
\prod_{v \in K \setminus \{ k \}} \prod_{u = 0}^{( m - 1 ) v} %
\bigl( ( v - k ) a_1 + ( u - j ) \delta \bigr) \neq 0.
\]
Furthermore, the polynomial $P$ is the sum of polynomials with
distinct positive degrees, so is non-constant. We conclude from
Theorem~\ref{TschoenbergPF} that $p \circ \Lambda_\balpha$ is not a
P\'olya frequency function.
\end{proof}

We conclude with a discussion of the structure of the class of
polynomials mapping a fixed Hirschman--Widder density into the class
of P\'olya frequency functions.

\begin{proposition}\label{Phwpf}
Suppose the entries of
$\ba = ( a_1, \ldots, a_m ) \in ( 0, \infty )^m$ are linearly
independent over $\Q$ and strictly increasing, and let $\bc$ be as in
Proposition~\ref{P2}. For the polynomial
$p( x ) = \sum_{k \in K} r_k x^k$, where $K$ is a finite set of
non-negative integers and~$r_k \neq 0$ for all $k \in K$, the
following are equivalent.
\begin{enumerate}
\item $p \circ \Lambda_{\ba, \bc}$ is a P\'olya frequency function.
\item The function
\[
\eval_{\widetilde{P}} : \Delta_m( K ) \to \R; \ %
\bk \mapsto r_{| \bk |} \binom{| \bk |}{\bk} \bc^\bk %
\prod_{\bj \in \Delta_m( K ) \setminus \{ \bk \}} %
( \bj - \bk ) \cdot \ba
\]
is constant, where $| \bk | := k_1 + \cdots + k_m$.
\end{enumerate}
\end{proposition}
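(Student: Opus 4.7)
The plan is to compute the bilateral Laplace transform $\cB\{p \circ \Lambda_{\ba,\bc}\}(s) = P(s)/Q(s)$ as a rational function and apply Theorem~\ref{TschoenbergPF}. This is essentially the strategy used for Theorem~\ref{T1sidedexample}(1), and the same calculation can be recycled, with $\bc$ in place of $\bd$.

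Expanding $\Lambda_{\ba,\bc}(x)^k = \bJ_{x \geq 0} \sum_{\bj \in \Delta_m(k)} \binom{k}{\bj} \bc^\bj e^{-(\bj \cdot \ba) x}$ by the multinomial theorem and taking Laplace transforms yields
\[
\cB\{ p \circ \Lambda_{\ba,\bc} \}(s) =
\sum_{k \in K} \sum_{\bj \in \Delta_m(k)}
\frac{r_k \binom{k}{\bj} \bc^\bj}{s + \bj \cdot \ba} =
\frac{P(s)}{Q(s)},
\]
where $Q(s) := \prod_{\bj \in \Delta_m(K)} (s + \bj \cdot \ba)$ and $\deg P \leq |\Delta_m(K)| - 1$. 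The $\Q$-linear independence of the entries of $\ba$ ensures that the values $\bj \cdot \ba$ for $\bj \in \Delta_m(K)$ are pairwise distinct, so $Q$ has $|\Delta_m(K)|$ simple roots $-\bj \cdot \ba$. Substituting $s = -\bk \cdot \ba$ into $P$ annihilates every summand with $\bj \neq \bk$, because the product over $\Delta_m(K) \setminus \{\bj\}$ then contains the factor $(\bk - \bk) \cdot \ba = 0$ corresponding to $\bj' = \bk$; only the term $\bj = \bk$ survives, giving $P(-\bk \cdot \ba) = \eval_{\widetilde{P}}(\bk)$. Moreover each $\eval_{\widetilde{P}}(\bk)$ is nonzero: $r_{|\bk|} \neq 0$ by hypothesis, $\binom{|\bk|}{\bk} > 0$, every entry of $\bc$ is nonzero by Proposition~\ref{P2} so $\bc^\bk \neq 0$, and each $(\bj - \bk) \cdot \ba \neq 0$ for $\bj \neq \bk$ by $\Q$-linear independence of $\ba$ applied to $\bj - \bk \in \Z^m \setminus \{\bZ\}$. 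Hence $P$ and $Q$ share no roots.

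By Theorem~\ref{TschoenbergPF}, (1) is equivalent to $Q/P$ being (the restriction of) a function in the first Laguerre--P\'olya class, in particular an entire function. Coprimality of $P$ and $Q$ forces $P$ to be a (necessarily positive) nonzero constant $c^*$, and then $Q/c^*$ is a polynomial with only real negative roots, hence automatically of the required Laguerre--P\'olya form. A degree count closes the loop: since $\deg P \leq |\Delta_m(K)| - 1$, the polynomial $P$ is constant if and only if it takes the same value at the $|\Delta_m(K)|$ distinct points $-\bk \cdot \ba$, which by the above identification is precisely the statement that $\eval_{\widetilde{P}}$ is constant on $\Delta_m(K)$. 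The main challenge is the algebraic identification $P(-\bk \cdot \ba) = \eval_{\widetilde{P}}(\bk)$ and the non-vanishing verification; thereafter, Schoenberg's theorem and polynomial interpolation complete the argument essentially for free.
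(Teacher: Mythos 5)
Your proof follows the paper's argument step for step: expand $\Lambda_{\ba,\bc}^k$ by the multinomial theorem, take Laplace transforms to obtain a rational function $\widetilde{P}/Q$ with $Q = \prod_{\bj\in\Delta_m(K)}(s+\bj\cdot\ba)$, use $\Q$-linear independence of $\ba$ to see the roots of $Q$ are simple and the values $\eval_{\widetilde{P}}(\bk)=\widetilde{P}(-\bk\cdot\ba)$ are nonzero, and then invoke Theorem~\ref{TschoenbergPF} together with the degree bound $\deg\widetilde{P} < |\Delta_m(K)|$ to reduce to interpolation at the $|\Delta_m(K)|$ distinct nodes. The paper abbreviates this by referring back to the proof of Theorem~\ref{T1sidedexample}(1); you simply spell out the steps, so the approach is the same.

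One small caveat is worth flagging, though it is inherited from the paper's own statement rather than introduced by you. You write that coprimality of $\widetilde{P}$ and $Q$ forces $\widetilde{P}$ to be a ``necessarily positive'' constant $c^*$, and that $Q/c^*$ is then automatically in the first Laguerre--P\'olya class. The positivity is indeed forced in the direction (1) $\Rightarrow$ (2), since a P\'olya frequency function has $Q/\widetilde{P}$ of the form \eqref{ELP} with $C>0$. But in the converse direction (2) $\Rightarrow$ (1), constancy of $\eval_{\widetilde{P}}$ only pins down $\widetilde{P}$ up to sign, and if the common value is negative then $p\circ\Lambda_{\ba,\bc}\leq 0$ is not a P\'olya frequency function even though condition (2) as written holds; for instance $p(x)=-x^2$ with $m=2$ yields a constant negative $\eval_{\widetilde{P}}$. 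So the equivalence implicitly requires the constant value to be positive, and strictly speaking condition (2) should read ``is a positive constant'' (or one should restrict to $p$ with positive leading coefficient, or account separately for the sign). Your remark in parentheses actually makes this point more visible than the paper does, but neither proof closes the loop in the backward implication.
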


\begin{proof}
Following the proof of Theorem~\ref{T1sidedexample}(1), we have
that
$\cB \{ p \circ \Lambda_{\ba,\bc} \}(s) = \widetilde{P}( s ) / Q( s )$,
where
\[
Q( s ) = \prod_{\bk \in \Delta_m( K )} ( s + \bk \cdot \ba ) %
\quad \text{and} \quad %
\widetilde{P}( s ) = \sum_{\bk \in \Delta_m( K )} r_{| \bk |} %
\binom{| \bk |}{\bk} \bc^\bk %
\prod_{\bj \in \Delta_m( K ) \setminus \{ \bk \}} %
( s + \bj \cdot \ba ).
\]
As for $P$ above, the polynomial $\widetilde{P}$ does not vanish at
any root of $Q$. Thus $p \circ \Lambda_{\ba, \bc}$ is a P\'olya
frequency function if and only if $\widetilde{P}$ is a constant. Since
$\deg \widetilde{P} < | \Delta_m( K ) |$, it suffices to check that
evaluating $\widetilde{P}$ at the distinct points
$\{ -\bk \cdot \ba : \bk \in \Delta_m( K ) \}$ always yields the same
answer, but this is precisely~(2) above.
\end{proof}

\begin{remark}
We may further characterize when a polynomial
$p( x ) = \sum_{k \in K} r_k x^k$ maps the Hirschman--Widder density
$\Lambda_{\ba, \bc}$ into the class of all such densities. This
happens if and only if Proposition~\ref{Phwpf}(2) holds and the
function $p \circ \Lambda_{\ba, \bc}$ has unit integral. As
\[
( p \circ \Lambda_{\ba, \bc} )( x ) = %
\sum_{\bk \in \Delta_m( K )} r_{| \bk |} %
\binom{| \bk |}{\bk} \bc^\bk e^{-(\bk \cdot \ba) x},
\]
we obtain the additional condition
\[
\sum_{\bk \in \Delta_m( K )} r_{| \bk |} \binom{| \bk |}{\bk} %
\frac{\bc^\bk}{\bk \cdot \ba} = 1.
\]
\end{remark}



\end{document}